\documentclass[12pt]{amsart}
\usepackage[T1]{fontenc}
\usepackage[english]{babel}
\usepackage[english]{babel}
\usepackage{amsmath, amssymb, amsthm, amscd,color,comment}
\usepackage{cancel}
\usepackage{cite}
\usepackage{alltt}
\usepackage[dvipsnames]{xcolor}
\usepackage{array}
\usepackage[small,bf,labelsep=period]{caption}
\usepackage{mathtools}
\usepackage{hyperref}

\usepackage{enumerate}
\newtheorem{theorem}{Theorem}[section]
\newtheorem{definition}[theorem]{Definition}
\newtheorem{example}[theorem]{Example}

\newtheorem{lemma}[theorem]{Lemma}
\newtheorem{corollary}[theorem]{Corollary}
\newtheorem{proposition}[theorem]{Proposition}

\DeclarePairedDelimiter\ceil{\lceil}{\rceil}
\DeclarePairedDelimiter\floor{\lfloor}{\rfloor}

\def\la{\lambda}

\def\Q{{\bf Q}}

\def\N{\mathbb N}
\def\R{\mathbb R}
\def\Z{\mathbb Z}

\def\cF{\mathcal F}

\def\cL{\mathcal L}

\def\cP{\mathcal P}
\def\cM{\mathcal M}

\def\cG{\mathcal G}

\def\cX{\mathcal X}
\def\cY{\mathcal Y}

\def\fqs{\mathbb F_{q^s}}

\def\fq{\mathbb F_q}
\def\fp{\mathbb F_p}

\def\Div{{\rm Div}}

\def\deg{{\rm deg}}

\def\lub{{\rm lub}}
\def\glb{{\rm glb}}

\def\negalpha{\text{\boldmath$\alpha$}}

\def\neg1{\text{\boldmath$1$}}
\def\nege{\text{\boldmath$e$}}
\def\negbeta{\text{\boldmath$\beta$}}

\def\neg1{\text{\boldmath$1$}}

\newcommand{\al}{\alpha}
\newcommand{\be}{\beta}

\begin{document}

\title[On generalized Weierstrass semigroups]{On generalized
Weierstrass semigroups in linearized function fields}

\thanks{{\bf Keywords}: Generalized Weierstrass semigroups, Linearized function fields, Riemann-Roch spaces}

\thanks{{\bf Mathematics Subject Classification (2010)}: 14H55, 11G20}


\author{Erik Mendoza}

\address{Instituto de Matemática, Universidade Federal do Rio de Janeiro, Cidade Universitária, CEP 21941-909, Rio de Janeiro, Brazil}
\email{erik@im.ufrj.br}

\begin{abstract}
In this article, using the notion of discrepancies, we study the generalized Weierstrass semigroup $\widehat{H}(\mathbf{Q})$, where $\mathbf{Q}$ is an $n$-tuple of distinct totally ramified places of degree one in a linearized function field. As a consequence, we characterize and explicitly determine the sets of absolute maximal elements $\widehat{\Gamma}(\mathbf{Q})$ and relative maximal elements $\widehat{\Lambda}(\mathbf{Q})$, generalizing the existing results in the literature. Finally, we apply our results to some classes of algebraic curves.
\end{abstract}

\maketitle

\section{Introduction}

Let $\fq$ be a finite field with $q$ elements, $\cF$ a function field of one variable over $\fq$, and $\Q=(Q_1, \dots, Q_n)$ an $n$-tuple of distinct rational places in $\cF$. In recent decades, Weierstrass semigroups $H(\Q)$ and related objects, such as the set of gaps $G(\Q)$ and the set of pure gaps $G_0(\Q)$, have been intensely studied due to their applications to other areas. For instance, these objects play a fundamental role in coding theory, particularly in the construction of algebraic geometry (AG) codes with good parameters, see e.g. \cite{BQZ2018, CT2005, GKL1993, KNT2020}. 

The study of the Weierstrass semigruop $H(\Q)$ and their explicit determination are classic problems in algebraic geometry. For the case $n=1$, the Weierstrass semigroup was determined for several specific function fields, such as the Suzuki function field \cite{BMZ2020} and the Skabelund function field \cite{BLM2021}, as well as for families of function fields such as Kummer extensions \cite{ABQ2019, CMS2025, CMQ2024} and linearized function fields \cite{ZZ2026}, which include many of the most important function fields, such as the Hermitian, Norm-Trace, Artin-Schreier, and $GK$ function fields, among others.

The general case for several distinct rational places was first studied in \cite{CT2005}. To simplify its study, Matthews \cite{G2004} introduced the minimal generating set $\Gamma(\mathbf{Q})$, a set that completely determines $H(\mathbf{Q})$. Initially applied to Hermitian \cite{G2004} and Norm-trace \cite{GD2010} function fields, this concept was used to determine $H(\mathbf{Q})$ on several other families of function fields, such as the $GK$ function field \cite{CT2018gk}, function fields defined by curves of the type $f(y)=g(x)$ \cite{CT2018}, Kummer extensions \cite{YH2017, BQZ2018}, and linearized function fileds \cite{ZZ2026}.

The notion of the generalized Weierstrass semigroup $\widehat{H}(\mathbf{Q})$ was introduced by Delgado \cite{D1990} for function fields over algebraically closed fields, and subsequently studied by Beelen and Tuta\c{s} \cite{BT2006} over finite fields. In this context, the sets of absolute maximal elements $\widehat{\Gamma}(\mathbf{Q})$ and relative maximal elements $\widehat{\Lambda}(\mathbf{Q})$ are key objects. The determination of these sets allows us to completely obtain the generalized Weierstrass semigroup $\widehat{H}(\mathbf{Q})$ (and consequently $H(\mathbf{Q})$), the gap set $G(\mathbf{Q})$, and the pure gap set $G_0(\mathbf{Q})$, see \cite{MTT2019, TT2019, CMT2024, CMT2025}.
These sets were determined for several specific function fields, see e.g. \cite{TT2019, MT2023, CMT2025}, and recently, they were determined for arbitrary Kummer extensions \cite{CMT2026}.

Let $s, m$ be positive integers, $q$ be a power of a prime $p$, and $K=\fqs$.  
Consider the linearized polynomial  
$$
L(y) = \sum_{k=0}^{m} a_k y^{q^{k}} \in K[y],
$$  
where $m\leq s$, $a_m \neq 0$, $a_0 \neq 0$, and assume all its roots lie in $K$. Let $h(x)\in K(x)$ be a  rational function and consider the algebraic curve over $K$ defined by the affine equation
\begin{equation*}
	\cX: \quad L(y)=h(x).
\end{equation*}
The function field $K(\cX)$ is called a linearized function field. Many significant families belong to this class of function fields, particularly Artin–Schreier type extensions such as the Hermitian, Suzuki, Artin-Mumford, and Norm-Trace function fields, among others, which have been central object of several works, for instance, see \cite{D1975, G2003, CG2004, GS1991} and the references therein. 

In \cite{N2024}, Navarro provided a basis for the Riemann-Roch spaces $\mathcal{L}(G)$ and a formula for their dimension, where $G$ is a divisor in $K(\mathcal{X})$ with support in the set of totally ramified places of $K(\mathcal{X})/K(x)$. In \cite{ZZ2026}, Zhang and Zhao provided a decomposition of such Riemann-Roch spaces and, as a consequence, they determined the gap set $G(Q)$, where $Q$ is any totally ramified place of degree one in the extension $K(\mathcal{X})/K(x)$. In the same work, by constructing complex functions in $K(\mathcal{X})$, they determined the minimal generating set $\Gamma(\mathbf{Q})$, where $\mathbf{Q}$ is an $n$-tuple of distinct totally ramified places of degree one in $K(\mathcal{X})/K(x)$.

In this article, using the decomposition of Riemann-Roch spaces given by Zhang and Zhao in \cite[Theorem 3.1]{ZZ2026}, we obtain an arithmetic criterion to determine the equality of certain Riemann-Roch spaces, see Theorem \ref{teo_equiv_gap}. With this result, using the concept of discrepancies (see Definition \ref{discrepancy}) and the techniques given in \cite{CMT2026}, we characterize and explicitly determine the sets $\widehat{\Gamma}(\mathbf{Q})$ and $\widehat{\Lambda}(\mathbf{Q})$ of the generalized Weierstrass semigroup $\widehat{H}(\mathbf{Q})$, where $\mathbf{Q}$ is an $n$-tuple of distinct totally ramified places of degree one in $K(\mathcal{X})/K(x)$. These results generalize the principal result presented by Zhang and Zhao in \cite[Theorem 4.1]{ZZ2026}. Furthermore, we apply our results to certain families of algebraic curves.

This paper is organized as follows. In Section \ref{Section 2}, we introduce the preliminaries, notation, and basic results concerning to theory of function fields, generalized Weierstrass semigroups, and linearized function fields. In Section \ref{Section 3}, we provide an arithmetic criterion to determine the absolute and relative maximal elements of $\widehat{H}(\mathbf{Q})$, see Lemmas \ref{lemma_discrepancy_Kummer} and \ref{lemma2_discrepancy_Kummer}, and Theorem \ref{lemma_criterion_Upsilon}. In Section \ref{Section 4}, we provide an explicit description of the sets $\widehat{\Gamma}(\mathbf{Q})$ and $\widehat{\Lambda}(\mathbf{Q})$, see Theorem \ref{teo_widehat_Upsilon}. Finally, we determine the set of maximal elements of the Artin-Mumford curve and a certain curve with many rational places, see Examples \ref{ex1} and \ref{ex2}, respectively.

\section{Preliminaries and Notation} \label{Section 2}

Throughout this article, for $a$ and $b$ integers, we denote by $(a, b)$ the greatest common divisor of $a$ and $b$, and by $b \bmod{a} $ the smallest nonnegative integer congruent with $b$ modulo $a$. For $c\in \R$, we denote by $\floor*{c}$ and $\ceil*{c}$ the floor and ceiling functions of $c$, respectively. We also denote $\N_0 = \N \cup \{0\}$, where $\N$ is the set of positive integers. 

\subsection{Function fields and generalized Weierstrass semigroups} \label{FF GWS}
Let $\fq$ be the finite field with $q$ elements, where $q$ is the power of a prime number $p$.
Let $\cF$ be a function field of one variable over $\fq$ with genus $g(\cF)$. We denote by $\mathcal P_{\cF}$ the set of places in $\cF$, by $\nu_{P}$ the discrete valuation of $\cF$ associated to the place $P\in \cP_{\cF}$, and by $\Div (\cF)$ the group of divisors on $\cF$. For a function $z \in \cF$, $(z)_{\cF}$ and $(z)_\infty$ stand for the principal and pole divisors of the function $z$ in $\cF$, respectively. Given a divisor $G\in \Div(\cF)$, the Riemann-Roch space associated to the divisor $G$ is defined by
$$
\cL(G)=\{z\in \cF: (z)_{\cF}+G\geq 0\}\cup \{0\},
$$  
and we denote by $\ell(G)$ its dimension as vector space over $\fq$.

Let $\Q=(Q_1, \dots, Q_n)$ be an $n$-tuple of distinct rational places in $\cF$. The \emph{Weierstrass semigroup} $H(\Q)$ of $\cF$ at $\Q$ is defined as the set 
$$
H(\mathbf{Q}) := \left\{(a_{1}, \ldots, a_{n}) \in \mathbb{N}_{0}^ {n} :  (z)_{\infty} = \textstyle\sum_{i=1}^ {n} a_{i}Q_{i} \text{ for some }z\in \cF \right\},
$$
and the \emph{generalized Weierstrass semigroup} $\widehat{H}(\Q)$ of $\cF$ at $\Q$ is defined by 
$$
\widehat{H}(\Q):=\{(-\nu_{Q_1}(z),\dots ,-\nu_{Q_n}(z))\in \Z^n : z \in R_\Q\setminus\{0\}\},
$$
where $R_\Q$ denotes the ring of functions on $\cF$ that are regular outside the places in the set
$\{Q_1,\dots, Q_n\}$. By \cite[Proposition 2]{BT2006}, we know that if $q \geq n$, then $H(\Q)=\widehat{H}(\Q)\cap \N_0^n$. 

The elements in the complement set ${G(\mathbf{Q}):=\N^n_0\setminus H(\mathbf{Q})}$ are called \emph{gaps} of $\cF$ at $\mathbf{Q}$ and can be characterized using Riemann-Roch spaces. In fact, for an $n$-tuple $\negalpha=(\al_1, \dots, \al_n)\in  \N_0^n$ we have 
that $\negalpha\in G(\Q)$ if and only if $$\cL(D_{\negalpha}(\Q)) = \cL(D_{\negalpha}(\Q) - Q_i) \text{ for some } 1 \leq i \leq n,$$ where $D_\negalpha(\Q) := \alpha_1 Q_1 + \cdots + \alpha_n Q_n$. On the other hand, a \emph{pure gap} of $\cF$ at $\Q$ is an $n$-tuple $\negalpha = (\alpha_1, \ldots, \alpha_n) \in G(\mathbf{Q})$ such that $$\cL(D_\negalpha(\Q)) = \cL(D_\negalpha(\Q) - Q_i) \text{ for every } 1\leq i \leq n.$$
 The set of pure gaps of $\mathcal{F}$ at $\mathbf{Q}$ will be denoted by $G_0(\mathbf{Q})$. These sets, $G(\mathbf{Q})$ and $G_0(\mathbf{Q})$, have been intensively studied in recent years due to their various applications in other areas, such as coding theory, see e.g. \cite{BQZ2018, CT2005, GKL1993, KNT2020}.
 
The elements of the generalized semigroup $\widehat{H}(\Q)$ also can be characterized using Riemann-Roch spaces as follows. 

\begin{proposition}\cite[Proposition 2.2]{MTT2019} \label{prop_hat_H(Q)}
Let $\negalpha \in \mathbb{Z}^n$ and assume that $q \geq n$. Then
$$\negalpha \in \widehat{H} (\mathbf{Q})\quad \text{if and only if}\quad \cL(D_{\negalpha}(\Q)) \neq \cL(D_{\negalpha}(\Q) - Q_i) \text{ for every } 1\leq i\leq n.$$
\end{proposition}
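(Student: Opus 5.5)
The plan is to prove both directions directly from the definition of $\widehat{H}(\mathbf{Q})$, working throughout with the divisor $D_{\negalpha}(\Q)=\sum_i \alpha_i Q_i$.

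For the forward implication, let $\negalpha=(-\nu_{Q_1}(z),\dots,-\nu_{Q_n}(z))$ for some $z\in R_\Q\setminus\{0\}$. Since $z$ is regular outside $\{Q_1,\dots,Q_n\}$ and $\nu_{Q_j}(z)=-\alpha_j$ for every $j$, we have $(z)_{\cF}+D_{\negalpha}(\Q)\geq 0$, so $z\in\cL(D_{\negalpha}(\Q))$. On the other hand, $\nu_{Q_i}(z)=-\alpha_i<-(\alpha_i-1)$, so $z\notin \cL(D_{\negalpha}(\Q)-Q_i)$. This gives the strict inclusion $\cL(D_{\negalpha}(\Q)-Q_i)\subsetneq \cL(D_{\negalpha}(\Q))$ for every $i$. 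This direction does not need the hypothesis $q\geq n$.

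For the converse, assume the inclusion is strict for every $i$. For each $i$, pick $z_i\in\cL(D_{\negalpha}(\Q))\setminus\cL(D_{\negalpha}(\Q)-Q_i)$. Then $\nu_{Q_i}(z_i)=-\alpha_i$ and $\nu_{Q_j}(z_i)\geq-\alpha_j$ for all $j$. Moreover $z_i\in R_\Q$, because $D_{\negalpha}(\Q)$ is supported on the $Q_j$. The goal is a single linear combination $z=\sum_i c_i z_i$ with $c_i\in\fq$ such that $\nu_{Q_j}(z)=-\alpha_j$ for all $j$. Such a $z$ lies in $R_\Q\setminus\{0\}$ and realizes $\negalpha$.

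The main step is the choice of the $c_i$. Fix a local parameter $t_j$ at $Q_j$, and look at the coefficient of $t_j^{-\alpha_j}$ in the expansion of $\sum_i c_iz_i$ at $Q_j$. Since $Q_j$ is rational, this coefficient is an $\fq$-linear form $\ell_j(c_1,\dots,c_n)=\sum_i \lambda_{ij}c_i$, with $\lambda_{ij}\in\fq$ the leading coefficient of $z_i$ at $Q_j$. The form $\ell_j$ is nonzero because $\lambda_{jj}\neq 0$. So we need $c\in\fq^n$ avoiding the $n$ hyperplanes $\ker\ell_j$.

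A union of $n$ proper hyperplanes in $\fq^n$ cannot cover the space when $n\le q$. One way to see this is to restrict to a suitable line not contained in any of the hyperplanes: it has $q$ points, and each hyperplane meets it in at most one point. More concretely, one can count $|\bigcup_j\ker \ell_j|\le 1+n(q^{n-1}-1)<q^n$ when $n\le q$; this bound should be checked carefully, including the boundary case $n=q$. Alternatively, argue inductively with $c=(1,c_2,\dots)$, choosing each new coefficient to avoid at most one bad value per place, which uses at most $n-1<q$ exclusions at each step.

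I expect the only genuine obstacle to be this avoidance step together with the exact form of the counting bound at $n=q$. Once a good $c$ exists, $z$ has $\nu_{Q_j}(z)=-\alpha_j$ for every $j$, and therefore $\negalpha\in\widehat{H}(\Q)$.
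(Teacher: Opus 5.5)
Your argument is correct and is essentially the standard proof of this result, which the paper only quotes from the literature. The forward direction is immediate, and for the converse one needs an element of $\cL(D_{\negalpha}(\Q))$ outside the $n$ proper subspaces $\cL(D_{\negalpha}(\Q)-Q_i)$; this exists because an $\fq$-vector space is never a union of $n\le q$ proper subspaces, and your hyperplanes $\ker\ell_j$ are just the preimages of these subspaces under $c\mapsto\sum_i c_iz_i$.

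Your counting bound settles the boundary case $n=q$, since $1+n(q^{n-1}-1)\le 1+q^n-q<q^n$ for $n\le q$. Discard the variant with $c_1=1$ fixed, and the line count as you stated it, because both only work for $n<q$. For example, take $q=n=2$ with $\lambda_{11}=\lambda_{21}=\lambda_{22}=1$ and $\lambda_{12}=0$: neither $c=(1,0)$ nor $c=(1,1)$ works, although $c=(0,1)$ does.
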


A fundamental concept in the study of generalized Weierstrass semigroups $\widehat{H}(\Q)$ is notion of maximal elements, which play a crucial role in describing the elements of $\widehat{H}(\Q), G(\Q)$ and $G_0(\Q)$, see \cite{CMT2024, CMT2025, MTT2019, TT2019}. To introduce the concept of maximal elements we need to define the sets below. Set $I:=\{1,\ldots,n\}$. For $i\in I$, a nonempty subset $J\subsetneq I$, and $\negalpha=(\alpha_1,\ldots,\alpha_n)\in \Z^n$, we shall denote
\begin{itemize}
\item [$\bullet$] $\overline{\nabla}_J (\negalpha):=\{\negbeta\in \Z^n : \be_j=\al_j \text{ for }j\in J \text{ and } \be_i < \al_i \text{ for }i\notin J\}$,
\item [$\bullet$] $\nabla_J(\negalpha):=\overline{\nabla}_J(\negalpha)\cap \widehat{H}(\Q)$,
\item [$\bullet$] $\overline{\nabla}(\negalpha):=\cup_{i=1}^{n}\overline{\nabla}_i(\negalpha)$, where $\overline{\nabla}_i(\negalpha):= \overline{\nabla}_{\{i\}}(\negalpha)$, and
\item [$\bullet$] $\nabla(\negalpha):=\overline{\nabla}(\negalpha)\cap \widehat{H}(\Q)$.
\end{itemize}

\begin{definition} \label{defi maximals}
An element $\negalpha\in \widehat{H}(\Q)$ is called maximal if $\nabla(\negalpha)=\emptyset$. If moreover $\nabla_J(\negalpha)=\emptyset$ for every $J\subsetneq I$ with $|J| \geq 2$, we say that $\negalpha$ is absolute maximal. If $\negalpha$ is maximal and $\nabla_J(\negalpha)\neq\emptyset$ for every $J\subsetneq I$ with $|J| \geq 2$, we say that $\negalpha\in \widehat{H}(\Q)$ is relative maximal. The sets of absolute and relative maximal elements in $\widehat{H}(\Q)$ will be denoted, respectively, by $\widehat{\Gamma}(\Q)$ and $\widehat{\Lambda}(\Q)$.
\end{definition}

Observe that the notions of absolute and relative maximality coincide when $n=2$. Now, define the following sets $$\Gamma(\Q):=\widehat{\Gamma}(\Q)\cap \N^n \quad \text{and}\quad  \widehat{\Lambda}(\Q):=\Lambda(\Q)\cap \N^n.$$ 
The set $\Gamma(\mathbf{Q})$, called the \emph{minimal generating set} of the Weierstrass semigroup $H(\mathbf{Q})$, completely determines the Weierstrass semigroup $H(\mathbf{Q})$, see \cite[Theorem 7]{MG2004}. On the other hand, the sets $\widehat{\Gamma}(\mathbf{Q})$ and $\Lambda(\mathbf{Q})$ completely determine the generalized Weierstrass semigroup $\widehat{H}(\mathbf{Q})$ and the set of pure gaps $G_0(\mathbf{Q})$, respectively. In fact, for $\negbeta^1,\dots, \negbeta^s\in \Z^n$, the least upper bound and the greatest lower bound  of $\negbeta^1,\dots, \negbeta^s$ are defined as
$$
\lub(\negbeta^1,\dots , \negbeta^s):=(\max\{\be_1^1, \dots, \be_1^s\},\dots , \max\{\be_n^1, \dots, \be_n^s\})\in \Z^n
$$
and 
$$
\glb(\negbeta^1,\dots , \negbeta^s):=(\min\{\be_1^1, \dots, \be_1^s\},\dots , \min\{\be_n^1, \dots, \be_n^s\})\in \Z^n,
$$
respectively. With these concepts, we have the following characterizations of $\widehat{H}(\Q)$ and $G_0(\Q)$.

\begin{theorem}\cite[Theorem 3.4]{MTT2019}
Assume $2 \leq n\leq q$. Then
$$
\widehat{H}(\Q)=\{\lub(\negbeta^1, \dots, \negbeta^n): \negbeta^1, \dots, \negbeta^n\in \widehat{\Gamma}(\Q)\}.
$$
\end{theorem}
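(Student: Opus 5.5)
One inclusion is immediate and the other carries all the content. Since $\widehat{\Gamma}(\Q)\subseteq\widehat{H}(\Q)$, it is enough to show that $\widehat{H}(\Q)$ is closed under $\lub$ and that every element of $\widehat{H}(\Q)$ is a $\lub$ of at most $n$ absolute maximal elements.

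\textbf{Closure under $\lub$.} Take $\negbeta,\negbeta'\in\widehat{H}(\Q)$, realised by $z,z'\in R_\Q$. For a generic $c\in\fq^*$, the function $z+cz'$ has $-\nu_{Q_i}(z+cz')=\max\{\be_i,\be_i'\}$ at every $i$. A cancellation at $Q_i$ can only happen when $\be_i=\be'_i$, and then it happens for exactly one value of $c$, namely the one killing the leading coefficients. So at most $n$ values of $c$ are bad, and this is the place where a hypothesis of the form $q\geq n$ is needed; one may have to pass to a finite extension of constants or use the sharper counting in Proposition \ref{prop_hat_H(Q)} if only $q\ge n$ is available. Iterating gives $\lub(\negbeta^1,\dots,\negbeta^n)\in\widehat{H}(\Q)$.

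\textbf{Decomposition of an arbitrary $\negalpha\in\widehat{H}(\Q)$.} For each $i\in I$, consider the set $\widehat{H}(\Q)\cap\{\negbeta:\be_i=\al_i,\ \be_j\le\al_j \text{ for all } j\}$. I will show that it contains an absolute maximal element $\negbeta^i$, that is, one with $\nabla_J(\negbeta^i)=\emptyset$ for all $J\subsetneq I$ with $i\in J$. Granting this, $\lub(\negbeta^1,\dots,\negbeta^n)=\negalpha$, since the $i$-th coordinate of $\negbeta^i$ equals $\al_i$ and all other coordinates are at most $\al_j$.

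To find $\negbeta^i$, I would start from $\negalpha$ and repeatedly move strictly downward inside some $\overline{\nabla}_J$ with $i\in J$ while staying in $\widehat{H}(\Q)$. Each move keeps $\be_i=\al_i$ and keeps the other coordinates at most $\al_j$. The process must terminate: for fixed $\be_i$, the coordinates $\be_j$ cannot decrease indefinitely, because $\sum_j\be_j\geq 0$ for every element of $\widehat{H}(\Q)$, by the degree of a principal divisor. Hence every coordinate is bounded below by $-\big(\al_i+\sum_{j\ne i}\al_j\big)$ along the chain, and the chain is finite. Its terminal element $\negbeta$ satisfies $\nabla_J(\negbeta)=\emptyset$ for every $J$ containing $i$.

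\textbf{Main obstacle.} Absolute maximality requires $\nabla_J(\negbeta)=\emptyset$ for every $J$, including those not containing $i$. To reach this I would use the Riemann--Roch characterization of Proposition \ref{prop_hat_H(Q)} together with the standard fact, in the spirit of \cite{MTT2019}, that for $\negbeta\in\widehat H(\Q)$ the following are equivalent: $\nabla_i(\negbeta)=\emptyset$, and $\ell(D_{\negbeta}(\Q))=\ell(D_{\negbeta}(\Q)-\sum_{j} Q_j)+1$ with all the intermediate inclusions as tight as they can be. This condition is symmetric in the indices, so emptiness of the fibres through $i$ propagates to all $J$. The step I would attempt first is to prove this symmetry: a dimension count shows that if some $\nabla_J(\negbeta)\ne\emptyset$ with $i\notin J$, then $\lub$-closure yields an element of $\nabla_{J\cup\{i\}}(\negbeta)$, which contradicts the terminal property.
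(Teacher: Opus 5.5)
Your overall architecture is sound: $\lub$-closure gives one inclusion, and for the other you take a minimal element $\beta$ of the finite fibre $\{\beta\in\widehat{H}(\Q):\beta_i=\al_i,\ \beta\le\negalpha\}$. The $\lub$-closure step is cleanest via Proposition \ref{prop_hat_H(Q)}: if $\al_k=\beta_k$, the function realizing $\beta$ lies in $\cL(D_{\negalpha}(\Q))\setminus\cL(D_{\negalpha}(\Q)-Q_k)$. However, the step you flag as the main obstacle is a genuine gap, and the route you propose for it does not work.

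First, your ``standard fact'' is false for $n\ge 3$. In the rational function field with $n=3\le q$ one has $\widehat{H}(\Q)=\{\beta:\beta_1+\beta_2+\beta_3\ge 0\}$. There $\beta=(1,0,0)$ has $\nabla_i(\beta)=\emptyset$ for every $i$, yet $(1,0,-1)\in\nabla_{\{1,2\}}(\beta)$ and $\ell(D_\beta(\Q))-\ell(D_\beta(\Q)-Q_1-Q_2-Q_3)=2$. So emptiness of the $\nabla_i$ does not propagate to all $\nabla_J$. Second, $\lub$-closure cannot produce the contradiction you want. A $\lub$ only raises coordinates, so a $\lub$ that is $\le\beta$ with $i$-th coordinate $\beta_i$ must have an argument already lying in the fibre below $\beta$. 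By terminality that argument is $\beta$ itself, so no new element of $\nabla_{J\cup\{i\}}(\beta)$ can arise this way.

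The missing ingredient is the cancellation property at a rational place, $\ell(A)-\ell(A-Q_j)\le 1$, which is the standard Beelen--Tuta\c{s} property, cf.\ \cite{BT2006}. Here is how it closes your argument.

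Let $\beta$ be terminal and suppose $\gamma\in\nabla_J(\beta)$ with $i\notin J$. Fix $j\in J$ and functions $z_\beta,z_\gamma\in R_\Q$ realizing $\beta$ and $\gamma$.
\begin{enumerate}[\rm (i)]
\item Both functions lie in $\cL(D_\beta(\Q))\setminus\cL(D_\beta(\Q)-Q_j)$. Hence $w:=z_\beta-cz_\gamma\in\cL(D_\beta(\Q)-Q_j)$ for some $c\in\fq^*$, and $w\neq 0$ because $\gamma\neq\beta$.
\item Since $\gamma_k<\beta_k$ for $k\notin J$, $w$ keeps the exact pole order $\beta_k$ at every $Q_k$ with $k\notin J$.
\item Thus $\delta:=(-\nu_{Q_1}(w),\dots,-\nu_{Q_n}(w))\in\widehat{H}(\Q)$ satisfies $\delta\le\beta$, $\delta_i=\beta_i$ and $\delta_j<\beta_j$.
\item So $\delta\in\nabla_{J'}(\beta)$ for the proper subset $J'=\{k:\delta_k=\beta_k\}$. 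Note that $J'\supseteq I\setminus J\ni i$; it is not $J\cup\{i\}$ as you wrote.
\end{enumerate}
This contradicts terminality, so the terminal element is absolute maximal and the proof goes through. The cancellation step needs no hypothesis on $q$; $q\ge n$ enters only through $\lub$-closure.

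One minor correction: along the chain the lower bound is $\beta_k\ge-\sum_{j\ne k}\al_j$, not $-\sum_j\al_j$. Finiteness is unaffected.
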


\begin{theorem}\cite[Corollary 4.2]{CMT2025}\label{coro_G0(Q)}
	Assume $2\leq n\leq q$. Then
	\begin{equation*}
		G_0(\Q)=\left\{\glb(\negbeta^1, \dots, \negbeta^n): \negbeta^l\in \Lambda(\Q)\text{ and } \beta_l^l<\min\{\beta_l^1, \dots, \widehat{\beta_l^l}, \dots, \beta_l^n\}\text{ for }1\leq l\leq n\right\}.
	\end{equation*}
\end{theorem}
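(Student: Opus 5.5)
The plan is to work directly with Riemann--Roch spaces, using the characterization of $\widehat{H}(\Q)$ in Proposition \ref{prop_hat_H(Q)}, and to identify $G_0(\Q)$ as the set of $\negalpha\in\N_0^n$ for which $\cL(D_\negalpha(\Q))=\cL(D_\negalpha(\Q)-Q_l)$ for every $l$. The argument has two inclusions.

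\emph{The glb of admissible relative maximal elements is a pure gap.} Take $\negbeta^1,\dots,\negbeta^n\in\Lambda(\Q)$ with $\beta_l^l<\beta_l^k$ for all $k\neq l$, and set $\negalpha=\glb(\negbeta^1,\dots,\negbeta^n)$, so that $\alpha_l=\beta_l^l$. Suppose, for contradiction, that $\cL(D_\negalpha)\neq\cL(D_\negalpha-Q_l)$ for some $l$. Then some $z\in R_\Q$ satisfies $-\nu_{Q_l}(z)=\alpha_l$ and $-\nu_{Q_i}(z)\le\alpha_i$ for all $i$.
\begin{itemize}
\item Using $q\ge n$ and the standard lemma that $\widehat{H}(\Q)$ is closed under lub, as in \cite{MTT2019}, I would produce from $z$ an element $\negbeta\in\widehat{H}(\Q)$ with $\beta_l=\alpha_l=\beta^l_l$ and $\beta_i\le\alpha_i\le\beta^l_i$ for $i\neq l$.
\item At indices $i\neq l$ where $\beta_i<\beta^l_i$, this $\negbeta$ lies in $\overline{\nabla}_J(\negbeta^l)$ for some $J\ni l$, or in $\overline{\nabla}_l(\negbeta^l)$.
\item Relative maximality forbids the case $J=\{l\}$. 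To exclude the larger $J$, I would use the strict inequalities $\alpha_i\le\beta^i_i<\beta^l_i$, which force $\beta_i<\beta^l_i$ for every $i\neq l$. Hence $\negbeta\in\nabla_l(\negbeta^l)=\emptyset$, a contradiction.
\end{itemize}
This also gives $\negalpha\in\N_0^n$, provided one checks that nonnegativity is built in. I expect this to hold since pure gaps are taken in $\N_0^n$, and the statement implicitly restricts to such glbs; I would verify this via $\Lambda(\Q)\subset\N^n$ as defined.

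\emph{Every pure gap arises this way.} Let $\negalpha\in G_0(\Q)$. For each $l$, consider the set of $\negbeta\in\widehat{H}(\Q)$ with $\beta_l=\alpha_l$ and $\beta_i>\alpha_i$ for all $i\neq l$. This set is nonempty: by Riemann--Roch, for large enough $\beta_i$ ($i\neq l$) there is a function with exactly these pole orders, since $\widehat{H}(\Q)$ is large in all directions. It is also bounded below in each coordinate $i\neq l$, because $\negalpha$ being a pure gap means no element of $\widehat{H}(\Q)$ has $l$-th coordinate $\alpha_l$ and the others $\le\alpha_i$.

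Choose $\negbeta^l$ in this set that is minimal coordinatewise, by repeatedly decreasing coordinates while staying in $\widehat{H}(\Q)$. Then:
\begin{itemize}
\item Maximality, $\nabla(\negbeta^l)=\emptyset$, follows because any element of $\nabla_l(\negbeta^l)$ or $\nabla_i(\negbeta^l)$ combined with $\negbeta^l$ via lub/glb operations would contradict minimality or the pure gap property.
\item Relative maximality, $\nabla_J\neq\emptyset$ for $|J|\ge2$, follows from the standard fact (as in \cite{MTT2019}) that an element with $\nabla_i=\emptyset$ for all $i$ has the right structure after a minimal choice.
\end{itemize}
Finally, $\glb(\negbeta^1,\dots,\negbeta^n)=\negalpha$ with the required strict inequalities, by construction.

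I expect the main obstacle to be the relative maximality in this second inclusion. I would first try to reduce it to \cite[Theorem 3.7]{MTT2019}-type characterizations: $\negbeta\in\Lambda(\Q)$ if and only if $\negbeta$ is maximal and $\ell(D_\negbeta)=\ell(D_\negbeta-\sum_{i}Q_i)+1$. I would then check that minimality forces this equality of dimensions.
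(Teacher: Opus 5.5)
The paper does not prove this statement; it quotes it from \cite{CMT2025}. So I judge your argument on its own merits. Your first inclusion is correct. It uses only the maximality of $\negbeta^l$ and the chain $\beta_i\le\alpha_i=\beta^i_i<\beta^l_i$ for $i\neq l$. The appeal to lub-closure is superfluous, since the pole vector of $z$ already lies in $\widehat H(\Q)$.

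The second inclusion has a genuine gap. A coordinatewise minimal element of $S_l=\{\negbeta\in\widehat H(\Q):\beta_l=\alpha_l,\ \beta_i>\alpha_i \text{ for } i\neq l\}$ need not be maximal, and even when it is maximal it need not be relative maximal. Take the Artin--Mumford curve of Example~\ref{ex1} with $p=5$, $n=3$ and $\negalpha=(1,1,1)$, which is a pure gap by Theorem~\ref{teo_equiv_gap}.
\begin{itemize}
\item By the same theorem with $j=1$, $(1,\beta_2,\beta_3)\in\widehat H(\Q)$ forces $\lfloor(\beta_2-1)/5\rfloor+\lfloor(\beta_3-1)/5\rfloor\ge 2$. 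Hence $(1,6,6)$ is a minimal element of $S_1$. But it lies in $\widehat\Gamma(\Q)$ (its $j$-sum is $2=p-n$), and $\widehat\Gamma(\Q)$ is disjoint from $\widehat\Lambda(\Q)$ when $n\ge 3$.
\item $(1,2,17)$ is also a minimal element of $S_1$: for $\beta_2=2$, the criterion at $j=2$ forces $\beta_3\ge 17$. Yet $(1,1,11)\in\widehat\Gamma(\Q)\cap\nabla_1((1,2,17))$, so $(1,2,17)$ is not even maximal.
\end{itemize}
The pure-gap property only rules out elements whose \emph{all} coordinates $i\neq l$ are at most $\alpha_i$. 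No lub/glb manipulation repairs this: $\widehat H(\Q)$ is not closed under glb, and taking the lub with $\negbeta^l$ just returns $\negbeta^l$. Your fallback criterion is also misremembered. For maximal $\negbeta$, the condition $\ell(D_{\negbeta}(\Q))=\ell(D_{\negbeta-\mathbf 1}(\Q))+1$ characterizes $\widehat\Gamma(\Q)$, whereas $\widehat\Lambda(\Q)$ corresponds to a jump of $n-1$. A minimal choice tends to produce the former, exactly as $(1,6,6)$ shows.

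What is missing is to approach the boundary from the gap side instead.
\begin{itemize}
\item For $\mathbf c=(c_i)_{i\neq l}$ with $c_i\ge\alpha_i$, put $F_{\mathbf c}=\alpha_lQ_l+\sum_{i\neq l}c_iQ_i$. Let $C_l$ be the set of such $\mathbf c$ with $\cL(F_{\mathbf c})=\cL(F_{\mathbf c}-Q_l)$. It is nonempty because $\negalpha$ is a pure gap, and finite by Riemann--Roch.
\item Choose $\mathbf c$ maximal in $C_l$. Let $\negbeta^l$ have $l$-th coordinate $\alpha_l$ and $i$-th coordinate $c_i+1$ for $i\neq l$. Set $E=D_{\negbeta^l-\mathbf 1}(\Q)=F_{\mathbf c}-Q_l$.
\item Membership in $C_l$ gives $\ell(E+Q_l)=\ell(E)$. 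Maximality gives $\ell(E+Q_l+Q_k)=\ell(E+Q_k)+1$ for $k\neq l$. Together these force $\ell(E+Q_k)=\ell(E)$ for every $k$.
\item For distinct $k,k'\neq l$, pick $z_k\in\cL(E+Q_l+Q_k)\setminus\cL(E+Q_k)$ and $z_{k'}$ likewise. Each has pole order exactly $\alpha_l$ at the rational place $Q_l$, and exactly $c_k+1$ at $Q_k$ (resp.\ $c_{k'}+1$ at $Q_{k'}$), since $\cL(E+Q_l)=\cL(E)$. A suitable combination $z_k-az_{k'}$ then lies in $\cL(E+Q_k+Q_{k'})$ with exact pole orders $c_k+1$ and $c_{k'}+1$ at $Q_k,Q_{k'}$. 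Hence $\ell(E+Q_k+Q_{k'})=\ell(E)+1$.
\end{itemize}
These are precisely the discrepancy conditions of Proposition~\ref{prop_discrepamcia_Lambda}. Positivity holds because a pure gap has no zero coordinate, so $\negbeta^l\in\Lambda(\Q)$. The glb equality and the strict inequalities hold by construction. In the example this produces $(1,6,11)$ and $(1,11,6)$, which are the relative maximal elements that actually realize $(1,1,1)$.
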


Next, we present results that characterize the absolute and relative maximal elements of $\widehat{H}(\Q)$. To do this, we introduce the concept of \emph{discrepancy}.

\begin{definition} \label{discrepancy}
Let $Q_1$ and $Q_2$ be distinct rational places in $\cF$. A divisor $A\in \Div(\cF)$ is called a discrepancy with respect to $Q_1$ and $Q_2$ if
$$
\cL(A)\neq \cL(A-Q_1)\quad \text{and}\quad\cL(A-Q_2)=\cL(A-Q_1-Q_2).
$$ 
\end{definition}

The following results characterize the elements of the sets $\widehat{\Gamma}(\mathbf{Q})$ and $\widehat{\Lambda}(\mathbf{Q})$ using the notion of discrepancy.

\begin{proposition}\cite[Proposition 3]{TT2019FF} \label{prop_discrepancia_Gamma}
Let $\negalpha \in \mathbb{Z}^n$ and assume that $q\geq n$. The following statements are equivalent:
\begin{enumerate}[\rm (i)]
\item $\negalpha \in \widehat{\Gamma}(\mathbf{Q})$.
\item $D_{\negalpha}(\mathbf{Q})$ is a discrepancy with respect to any pair of distinct places in $\{Q_1,\ldots,Q_{n}\}$.
\end{enumerate}
\end{proposition}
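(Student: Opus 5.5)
We prove the equivalence by translating each condition into Riemann–Roch spaces, using Proposition \ref{prop_hat_H(Q)} as the dictionary between membership in $\widehat{H}(\Q)$ and strict inclusions $\cL(D_{\negbeta}(\Q)-Q_i)\subsetneq \cL(D_{\negbeta}(\Q))$. Throughout, $q\geq n$ lets us apply that proposition freely. We also use repeatedly the standard fact that $\cL(A)\neq\cL(A-Q)$ for a rational place $Q$ holds exactly when some function in $\cL(A)$ has valuation exactly $-A(Q)$ at $Q$. Consequently, $\cL(A)=\cL(A-Q)$ is equivalent to $\cL(A)=\cL(A-kQ)$ for all $k\geq 1$ up to the first jump. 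This monotonicity lets us pass between conditions for a single place and for several places.

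\emph{(i)$\Rightarrow$(ii).} Let $\negalpha\in\widehat{\Gamma}(\Q)$ and fix distinct $Q_i,Q_j$. Since $\negalpha\in\widehat H(\Q)$, Proposition \ref{prop_hat_H(Q)} gives $\cL(D_\negalpha)\neq\cL(D_\negalpha-Q_i)$. Suppose, for contradiction, that $\cL(D_\negalpha-Q_j)\neq\cL(D_\negalpha-Q_i-Q_j)$. Then there is $z$ with $-\nu_{Q_j}(z)\le\alpha_j-1$ and $-\nu_{Q_i}(z)=\alpha_i$. We want to upgrade $z$ to an element of $\nabla_J(\negalpha)$ for some $J$ with $i\in J$, $j\notin J$; for $n=2$ this is $\nabla_i(\negalpha)$. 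We proceed by descending induction on the coordinates $k\neq i,j$. For each such $k$, we either keep $\alpha_k$ by adding a suitable combination of $z$ with functions realizing the pole order $\alpha_k$ at $Q_k$ (these exist because $\negalpha\in\widehat H$), or we lower it. In this way we build a vector $\negbeta\le\negalpha$ with $\beta_i=\alpha_i$, $\beta_j<\alpha_j$, and $\negbeta\in\widehat H(\Q)$. Membership in $\widehat H$ is checked through Proposition \ref{prop_hat_H(Q)}: we take $\negbeta$ to be the pole vector of a function in $R_\Q$ obtained as a generic $\fq$-linear combination. Here $q\ge n$ guarantees we can avoid cancellation at all $n$ places simultaneously; this is the role of that hypothesis in \cite{BT2006}. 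The set $J=\{k:\beta_k=\alpha_k\}$ contains $i$ and not $j$, so $\nabla_J(\negalpha)\neq\emptyset$ with $J\subsetneq I$. This contradicts $\negalpha$ being maximal when $|J|=1$, and absolute maximal when $|J|\ge2$.

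\emph{(ii)$\Rightarrow$(i).} First, $\cL(D_\negalpha)\ne\cL(D_\negalpha-Q_i)$ for every $i$, because it is part of the discrepancy condition for every ordered pair. Hence $\negalpha\in\widehat H(\Q)$. Now suppose $\negbeta\in\nabla_J(\negalpha)$ for some nonempty $J\subsetneq I$. Pick $i\in J$ and $j\notin J$. Take $z\in R_\Q$ with pole vector $\negbeta$. Then $z\in\cL(D_\negalpha-Q_j)$ because $\beta_j<\alpha_j$, and $z\notin\cL(D_\negalpha-Q_i-Q_j)$ because $\beta_i=\alpha_i$. This contradicts $\cL(D_\negalpha-Q_j)=\cL(D_\negalpha-Q_i-Q_j)$. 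So every $\nabla_J(\negalpha)$ is empty, and $\negalpha$ is absolute maximal.

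The main obstacle is the direction (i)$\Rightarrow$(ii). There, one must produce from the single function $z$ an honest element of $\widehat H(\Q)$ dominated by $\negalpha$ that keeps the $i$-th coordinate and drops the $j$-th. This needs the ``generic linear combination'' argument, which requires $q\ge n$, to control pole orders at all other places at once. I would model this step on the proof of \cite[Proposition 2.2]{MTT2019}.
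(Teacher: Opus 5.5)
Your (ii)$\Rightarrow$(i) argument is the right one. Your (i)$\Rightarrow$(ii) argument ends with the right contradiction. However, the step you call the main obstacle is not actually an obstacle, and the way you propose to handle it is not justified as written.

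The function $z\in\cL(D_\negalpha(\Q)-Q_j)\setminus\cL(D_\negalpha(\Q)-Q_i-Q_j)$ is nonzero. It is also regular outside $\{Q_1,\dots,Q_n\}$, because $D_\negalpha(\Q)$ is supported there. So $z\in R_\Q\setminus\{0\}$, and its pole vector $\negbeta=(-\nu_{Q_1}(z),\dots,-\nu_{Q_n}(z))$ lies in $\widehat{H}(\Q)$ by the definition of $\widehat{H}(\Q)$. This vector already satisfies $\beta_k\le\alpha_k$ for all $k$, $\beta_j<\alpha_j$, and $\beta_i=\alpha_i$. Set $J=\{k:\beta_k=\alpha_k\}$. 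Then $\negbeta\in\nabla_J(\negalpha)$ with $i\in J$ and $j\notin J$, which is exactly the contradiction you state in your last two sentences.

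So no descending induction and no generic linear combination are needed. Moreover, the modification you sketch could destroy the properties you need. It adds to $z$ functions of $\cL(D_\negalpha(\Q))$ that attain pole order $\alpha_k$ at $Q_k$. Such functions may have a pole of order exactly $\alpha_j$ at $Q_j$, or may cancel the pole of $z$ at $Q_i$. You should delete that paragraph rather than try to make it rigorous. The preliminary remark about passing from $\cL(A)=\cL(A-Q)$ to $\cL(A)=\cL(A-kQ)$ is never used and can also go.

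You have also misplaced the role of the hypothesis $q\ge n$. In (i)$\Rightarrow$(ii), the condition $\cL(D_\negalpha(\Q))\neq\cL(D_\negalpha(\Q)-Q_i)$ follows immediately from a function in $R_\Q$ whose pole vector is exactly $\negalpha$, so that direction needs no hypothesis. The hypothesis is used only in (ii)$\Rightarrow$(i). There you pass from $\cL(D_\negalpha(\Q))\neq\cL(D_\negalpha(\Q)-Q_i)$ for every $i$ to $\negalpha\in\widehat{H}(\Q)$. That is the nontrivial direction of Proposition~\ref{prop_hat_H(Q)}, and it is where the generic-combination argument actually lives.

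With these corrections your proof is the standard direct one, and it is short. The discrepancy condition for the pair $(Q_i,Q_j)$ fails exactly when some $\nabla_J(\negalpha)$ with $i\in J$ and $j\notin J$ is nonempty, as the pole-vector argument shows in both directions. The paper itself does not reprove the statement; it cites it.
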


\begin{proposition}\cite[Proposition 2.8]{TT2019} \label{prop_discrepamcia_Lambda}
Let $\negalpha\in \mathbb{Z}^n$ and assume that $q \geq n$. The following statements are equivalent:
\begin{enumerate}[\rm (i)]
\item $\negalpha\in \widehat{\Lambda}(\Q)$.

\item $D_{\negalpha-\textbf{1}}(\Q)+Q_i+Q_j$ is a discrepancy with respect to $Q_i$ and $Q_j$ for every $1\leq i,j\leq n$ with $i\neq j$, where $\neg1$ is the $n$-tuple whose coordinates are all $1$.
\end{enumerate}
\end{proposition}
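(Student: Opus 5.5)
The statement is the characterization of $\widehat{\Lambda}(\Q)$ through discrepancies, so I would derive it directly from Proposition \ref{prop_hat_H(Q)} and the definition of relative maximality. Throughout we assume $q\ge n$, so that membership of $\negbeta\in\Z^n$ in $\widehat{H}(\Q)$ is equivalent to $\cL(D_{\negbeta}(\Q))\neq\cL(D_{\negbeta}(\Q)-Q_k)$ for every $k$.

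\textbf{Translating maximality.} First I would rewrite the condition $\nabla_i(\negalpha)=\emptyset$ as the equality of Riemann--Roch spaces
$$\cL\Bigl(D_{\negalpha}(\Q)-\textstyle\sum_{k\neq i}Q_k\Bigr)=\cL\Bigl(D_{\negalpha}(\Q)-\textstyle\sum_{k}Q_k\Bigr).$$
For the forward direction, suppose a function $z$ lies in the left space but not the right. Then $-\nu_{Q_i}(z)=\al_i$ and $-\nu_{Q_k}(z)<\al_k$ for $k\ne i$. We need $z$ regular outside $\{Q_1,\dots,Q_n\}$, which is not automatic. I would handle this with the standard device used in \cite{MTT2019}: replace $D_{\negalpha}$ by the whole family of divisors obtained by lowering the $k\ne i$ coordinates arbitrarily, and use that, for $q\ge n$, the lower-dimensional spaces avoid a finite union of proper subspaces. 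This produces $z\in R_\Q$, hence an element of $\nabla_i(\negalpha)$.

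Conversely, an element of $\nabla_i(\negalpha)$ gives a function $z\in R_\Q$ with $\nu_{Q_i}(z)=-\al_i$ and $\nu_{Q_k}(z)\ge -\al_k+1$ for $k\ne i$. Such a $z$ lies in the left space but not the right, so the equality fails.

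\textbf{Translating the $\nabla_J$ condition.} Similarly, $\nabla_J(\negalpha)\neq\emptyset$ should be related to non-equalities involving the divisors $D_{\negalpha-\neg1}(\Q)+\sum_{j\in J}Q_j$. For $|J|=2$, say $J=\{i,j\}$, set $A=D_{\negalpha-\neg1}(\Q)+Q_i+Q_j$. The second half of the discrepancy condition, $\cL(A-Q_j)=\cL(A-Q_i-Q_j)$, is exactly $\nabla_i(\negalpha)=\emptyset$ by the previous step. The first half, $\cL(A)\ne\cL(A-Q_i)$, combined with $\nabla_i(\negalpha)=\nabla_j(\negalpha)=\emptyset$, should force the existence of a function with pole orders exactly $\al_i,\al_j$ at $Q_i,Q_j$ and strictly smaller orders elsewhere. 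That is, $\nabla_{\{i,j\}}(\negalpha)\neq\emptyset$. Conversely, an element of $\nabla_{\{i,j\}}(\negalpha)$ gives a function in $\cL(A)\setminus\cL(A-Q_i)$.

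\textbf{Main obstacle.} The hardest part is passing from the pairwise conditions to $\nabla_J(\negalpha)\ne\emptyset$ for all $|J|\ge 2$, and showing that $\negalpha\in\widehat{H}(\Q)$. For this I would argue by induction on $|J|$. Given functions realising $J_1$ and $J_2$ with $J_1\cap J_2\neq\emptyset$, take a suitable $\fq$-linear combination; since $q\ge n$, we can choose coefficients so that no leading coefficients cancel at any place of $J_1\cup J_2$. This yields an element of $\nabla_{J_1\cup J_2}(\negalpha)$. Taking $J=I$ in the same way gives $\negalpha\in\widehat{H}(\Q)$ via Proposition \ref{prop_hat_H(Q)}, and conversely relative maximality gives all the pairwise discrepancies by the translations above. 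The delicate point throughout is the coefficient choice avoiding cancellation, which is where the hypothesis $q\ge n$ is used.
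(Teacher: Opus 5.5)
The paper does not prove this proposition; it quotes it from \cite{TT2019}. Your outline therefore has to stand on its own, and in substance it does. For $A=D_{\negalpha-\mathbf{1}}(\Q)+Q_i+Q_j$, the second half of the discrepancy condition is exactly $\nabla_i(\negalpha)=\emptyset$. Given this, the first half is equivalent to $\nabla_{\{i,j\}}(\negalpha)\neq\emptyset$. The hypothesis $q\ge n$ is needed only to pass from pairs to larger $J$ and to $J=I$.

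There is, however, a genuine error in your first step. Regularity outside $\{Q_1,\dots,Q_n\}$ \emph{is} automatic. If $z\in\cL(G)$ with $G$ supported on $\{Q_1,\dots,Q_n\}$, then $(z)+G\ge 0$ gives $\nu_P(z)\ge 0$ at every other place $P$, so $z\in R_\Q$. The ``device'' you propose does not address anything here, since changing coefficients at the $Q_k$ cannot affect poles elsewhere; delete it. The equivalence $\nabla_i(\negalpha)=\emptyset \iff \cL(D_{\negalpha-\mathbf{1}}(\Q)+Q_i)=\cL(D_{\negalpha-\mathbf{1}}(\Q))$ is then immediate and holds for every $q$.

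Two further steps should be made precise.

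The ``should force'' in the $|J|=2$ step is a one-line argument that needs neither $\nabla_j(\negalpha)=\emptyset$ nor any counting. Suppose some $z\in\cL(A)\setminus\cL(A-Q_i)$ lay in $\cL(A-Q_j)$. Then $z\in\cL(A-Q_j)=\cL(A-Q_i-Q_j)\subseteq\cL(A-Q_i)$, a contradiction. Hence $z$ has pole order exactly $\al_i$ at $Q_i$, exactly $\al_j$ at $Q_j$, and at most $\al_k-1$ at the other $Q_k$.

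For the induction, make the count explicit. Take $J_1=J\setminus\{k\}$ and $J_2=\{i,k\}$ with $i\in J_1$, and set $z=z_1+cz_2$ with $c\in\fq^{*}$. Cancellation can occur only at $Q_i$, where the leading coefficients lie in $\fq^{*}$ because $Q_i$ is rational. So exactly one value of $c$ is excluded, and $q\ge n\ge 3$ leaves a valid choice.

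An equivalent route: set $B_J=D_{\negalpha}(\Q)-\sum_{k\notin J}Q_k$. Then $\nabla_J(\negalpha)\neq\emptyset$ iff $\cL(B_J)\neq\cL(B_J-Q_j)$ for all $j\in J$, because an $\fq$-vector space is not a union of at most $q$ proper subspaces. The witness for any pair $\{i,j\}\subseteq J$ already shows $\cL(B_J)\neq\cL(B_J-Q_j)$, since $B_{\{i,j\}}\le B_J$.

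Finally, for $n=2$ the pair $\{i,j\}$ is $I$ itself and $A=D_{\negalpha}(\Q)$. In the converse direction, the first half of the discrepancy then comes directly from $\negalpha\in\widehat{H}(\Q)$, not from a condition on $\nabla_J$.
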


Another tool from the theory of function fields that we will use in this work is the restriction of divisors. For a finite extension of function fields $\cG \subseteq \cF$ and $D \in \Div(\cF)$, we can write
$$
D = \sum_{P \in \mathcal{P}_{\cG}} \sum_{Q \in \mathcal{P}_{\cF} , Q|P} n_Q Q,
$$
where $Q|P$ means that $P$ lies under $Q$. The restriction of the divisor $D$ to function field $\cG$, denoted by $D\rvert_{\cG}$, is defined by
$$
D\rvert_{\cG}:=\sum_{P\in \cP_{\cG}}\min\left\{ \left\lfloor \dfrac{n_Q}{e(Q|P)}\right\rfloor : Q|P\right\}P,
$$ 
where $e(Q|P)$ is the ramification index of $Q$ over $P$.

\subsection{Linearized function fields}\label{subsection 2.2}
Let $s, m$ be positive integers, $q$ be a power of a prime $p$, and $K=\fqs$.  
Consider the linearized polynomial  
$$
L(y) = \sum_{k=0}^{m} a_k y^{q^{k}} \in K[y],
$$  
where $m\leq s$, $a_m \neq 0$, $a_0 \neq 0$, and assume all its roots lie in $K$. Let $h(x)\in K(x)$ be a rational function such that its principal divisor in $K(x)$ is given by
$$(h(x))_{K(x)}=\gamma_1T_1+\gamma_2T_2+\cdots+\gamma_uT_u-\la_1P_1-\la_2P_2-\cdots-\la_rP_r,$$
where 
\begin{itemize}
	\item $T_1, \dots, T_u$ are the places in $K(x)$ associated to the zeros of $h(x)$, 
	\item $P_1, \dots, P_r$ are the places in $K(x)$ associated to the poles of $h(x)$, and 
	\item $\gamma_1, \dots, \gamma_u, \la_1, \dots, \la_r$ are positive integers such that $p\nmid\la_k$ for every $1\leq k \leq r$. 
\end{itemize}
Now, consider the algebraic curve over $K$ defined by the affine equation
\begin{equation}\label{Xequation}
\cX: \quad L(y)=h(x).
\end{equation}
The function field $K(\mathcal{X})$ is called a linearized function field and we have the property $[K(\mathcal{X}):K(x)]=q^m$. 
From \cite[Proposition 3.7.10]{S2009} and \cite[Proposition 3.1]{N2024}, we have that $P_1, \dots, P_r$ are all the places totally ramified in $K(\cX)/K(x)$ and $T_1, \dots, T_u$ split completely in $K(\cX)/K(x)$. 
For each $1\leq i \leq r$, let $Q_i$ be the only place in $K(\cX)$ lying over $P_i$.

Furthermore, also from \cite[Proposition 3.1]{N2024}, we know that the principal divisor associated to $y$ in $K(\cX)$ is given by 
\begin{equation}\label{divisor_y}
(y)_{K(\cX)}=\gamma_1R_1+\gamma_2R_2+\dots +\gamma_uR_u-\la_1Q_1-\la_2Q_2-\cdots -\la_rQ_r,
\end{equation}
where $R_i$ is the place associated to the only zero of $y$ in $K(\cX)$ lying over $T_i$.

On the other hand, in \cite{ZZ2026} the authors provided a theorem that allows one to obtain a decomposition of certain Riemann-Roch spaces in this class of function fields.

\begin{theorem}\cite[Theorem 3.1]{ZZ2026}\label{teo_decomposition}
	Let $\cX$ be the curve as in (\ref{Xequation}) and $G\in \Div(\cX)$ be a divisor with support contained in the set of totally ramified places $\{Q_1, Q_2, \dots, Q_r\}$ of $K(\cX)/K(x)$. Then
	$$
	\cL(G)=\displaystyle\bigoplus_{i=0}^{q^m-1}\cL((G+i(y))\rvert_{K(x)})y^i.
	$$
\end{theorem}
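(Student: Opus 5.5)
The plan is to expand an arbitrary $z\in K(\cX)$ in the basis $1,y,\dots,y^{q^m-1}$ of $K(\cX)/K(x)$ and then check, place by place, that the condition $z\in\cL(G)$ splits into independent conditions on the coefficients. Write $z=\sum_{i=0}^{q^m-1} f_i y^i$ with $f_i\in K(x)$; this is possible because $[K(\cX):K(x)]=q^m$ and $y$ generates the extension. The summands are independent because the $y^i$ form a basis, so the sum is direct once both inclusions are established. Write $G=\sum_{k=1}^r n_kQ_k$, with $n_k=0$ allowed.

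\emph{Step 1 (totally ramified places).} At $Q_k$ we have $e(Q_k|P_k)=q^m$ and, by (\ref{divisor_y}), $\nu_{Q_k}(y)=-\la_k$. Hence
$$\nu_{Q_k}(f_iy^i)=q^m\nu_{P_k}(f_i)-i\la_k .$$
Since $p\nmid \la_k$, the residues $i\la_k \bmod q^m$ for $0\le i<q^m$ are pairwise distinct. So the nonzero summands have pairwise distinct valuations, and the strict triangle inequality gives $\nu_{Q_k}(z)=\min_i \nu_{Q_k}(f_iy^i)$. Therefore $\nu_{Q_k}(z)\ge -n_k$ holds if and only if, for every $i$,
$$\nu_{P_k}(f_i)\ge -\floor*{\tfrac{n_k-i\la_k}{q^m}} ,$$
which is exactly the condition imposed at $P_k$ by the divisor $(G+i(y))\rvert_{K(x)}$.

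\emph{Step 2 (other places).} Let $P\notin\{P_1,\dots,P_r\}$. I will show that $z$ has no pole above $P$ if and only if every $f_i$ is regular at $P$; this is the main obstacle. The direction ``$\Leftarrow$'' is clear, since $y$ has poles only at the $Q_k$. For ``$\Rightarrow$'', observe that $y$ is integral over the valuation ring $\mathcal O_P$, being a root of $a_m^{-1}(L(T)-h)$, which is monic with coefficients in $\mathcal O_P$ because $h\in\mathcal O_P$. The derivative of $L(T)-h$ is the nonzero constant $a_0$, so the discriminant of this polynomial is a unit in $\mathcal O_P$. The standard discriminant argument then shows that $\mathcal O_P[y]$ is the integral closure of $\mathcal O_P$ in $K(\cX)$. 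If $z$ is regular at every place above $P$, then $z$ lies in this integral closure, so $z\in \mathcal O_P[y]$. By uniqueness of coordinates in the basis $\{y^i\}$, this gives $f_i\in\mathcal O_P$ for all $i$.

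\emph{Step 3 (matching the divisors).} It remains to check that $(G+i(y))\rvert_{K(x)}$ has coefficient $0$ at every $P\notin\{P_1,\dots,P_r\}$.
\begin{itemize}
\item If $P=T_j$, then $T_j$ splits completely, and $i(y)$ has coefficient $i\gamma_j\ge 0$ at $R_j$ and $0$ at the remaining $q^m-1$ places above $T_j$. With ramification index $1$, the minimum of the floors is $0$.
\item At all other places $P$, every coefficient above $P$ is already $0$.
\end{itemize}
Combining Steps 1–3, $z\in\cL(G)$ if and only if $f_i\in\cL((G+i(y))\rvert_{K(x)})$ for every $i$. This proves the decomposition.
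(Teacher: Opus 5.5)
Your proof is correct and complete. The paper gives no argument of its own for this statement; it imports it verbatim from \cite{ZZ2026}, so there is no in-paper route to compare against, and what you have written is a self-contained proof by the natural local method.

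The two ingredients carry the argument.
\begin{itemize}
\item At each totally ramified $Q_k$, the nonzero summands $f_iy^i$ have valuations $q^m\nu_{P_k}(f_i)-i\lambda_k$. Since $(\lambda_k,q^m)=1$, these lie in pairwise distinct residue classes modulo $q^m$, so the pole condition splits coordinatewise. This also covers the $Q_k$ with $n_k=0$.
\item At every other place $P$, the monic polynomial $a_m^{-1}(L(T)-h)$ has coefficients in $\mathcal{O}_P$ and constant derivative $a_0/a_m\neq 0$. Hence $\{1,y,\dots,y^{q^m-1}\}$ is an integral basis at $P$. Equivalently, this is the standard criterion that $\nu_{P'}(\varphi'(y))=0$ for all $P'\mid P$. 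It forces each $f_i$ to be regular at $P$.
\end{itemize}

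Your Step 3 correctly handles the one point that is easy to overlook. Above $T_j$ there are $q^m-1\geq 1$ places besides $R_j$, so the restriction of $G+i(y)$ has coefficient $\min\{i\gamma_j,0\}=0$ at $T_j$ rather than $i\gamma_j$. Without this, the right-hand side would wrongly allow the $f_i$ to have poles at $T_j$.
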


\section{On maximal elements in generalized Weierstrass semigroups}\label{Section 3}

Using the same notation introduced in Subsection \ref{subsection 2.2}, let $\mathcal{X}$ be the curve given in (\ref{Xequation}) and, for $1\leq n \leq r$, let $\mathbf{Q}=(Q_1, Q_2, \dots, Q_n)$ be an $n$-tuple of distinct totally ramified places (not necessarily rational) in the extension $K(\mathcal{X})/K(x)$. The objective of this section is to provide an arithmetic criterion that completely characterizes the absolute and relative maximal elements of the generalized Weierstrass semigroup $\widehat{H}(\mathbf{Q})$ when $\Q$ is an $n$-tuple of distinct totally ramified places of degree one.

First of all, we establish an arithmetic criterion to determine when an $n$-tuple of integers $\boldsymbol{\alpha}=(\alpha_1, \dots, \alpha_n)$ satisfies the equality of Riemann-Roch spaces $$\mathcal{L}(D_{\boldsymbol{\alpha}}(\mathbf{Q}))=\mathcal{L}(D_{\boldsymbol{\alpha}}(\mathbf{Q})-Q_j)\quad  \text{for some }1\leq j\leq n,$$ where $D_{\boldsymbol{\alpha}}(\mathbf{Q}):=\sum_{k=1}^{n}\alpha_kQ_k$. To this end, we will use the decomposition of Riemann-Roch spaces given in Theorem \ref{teo_decomposition} and the following lemma.

\begin{lemma}\cite[Lemma 3.3]{ABQ2019}\label{lemma_floor}
Let $\al, \la\in \Z$ and $m\in \N$ be such that $(\la, m)=1$. Then there exists a unique integer $t\in \{0, \dots, m-1\}$ such that $\al+t\la\equiv 0 \bmod{m}$ and, for $i\in \Z$, we have
$$
\floor*{\frac{\al+i\la}{m}}=\left\{
\begin{array}{ll}
\displaystyle\floor*{\frac{\al-1+i\la}{m}}+1=\ceil*{\frac{\al}{m}}+\floor*{\frac{i\la}{m}}, & \text{if } i\equiv t \bmod{m},\\
\\
\displaystyle\floor*{\frac{\al-1+i\la}{m}}, & \text{if } i\not\equiv t \bmod{m}.
\end{array}\right.
$$  
\end{lemma}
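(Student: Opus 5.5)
The statement is an elementary fact about integers, so I would prove it directly with the division algorithm, without any function-field input.

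\emph{Existence and uniqueness of $t$.} Since $(\la,m)=1$, the residue of $\la$ is a unit modulo $m$. The congruence $\al+t\la\equiv 0 \bmod{m}$ is therefore equivalent to $t\equiv -\al\la^{-1}\bmod{m}$. This determines a single residue class, which has exactly one representative in $\{0,\dots,m-1\}$.

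\emph{Comparing $\floor*{(\al+i\la)/m}$ with $\floor*{(\al-1+i\la)/m}$.} Fix $i\in\Z$ and put $N=\al+i\la$. For any integer $N$ we have $\floor*{N/m}=\floor*{(N-1)/m}+1$ when $m\mid N$, and $\floor*{N/m}=\floor*{(N-1)/m}$ otherwise. To see this, write $N=km+\rho$ with $0\le\rho<m$. If $\rho\ge1$, then $N-1=km+(\rho-1)$ with $0\le \rho-1<m$, so both floors equal $k$. If $\rho=0$, then $N-1=(k-1)m+(m-1)$, so the second floor is $k-1$.

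It remains to identify when $m\mid N$. We have $m\mid \al+i\la$ if and only if $(i-t)\la\equiv 0\bmod{m}$, because $\al+t\la\equiv0$. Since $\la$ is invertible modulo $m$, this holds if and only if $i\equiv t\bmod{m}$. This gives the two cases of the displayed formula, apart from the middle expression.

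\emph{The identity $\floor*{(\al+i\la)/m}=\ceil*{\al/m}+\floor*{i\la/m}$ when $i\equiv t \bmod{m}$.} Here $N=\al+i\la$ is divisible by $m$, so $\floor*{N/m}=N/m=\al/m+i\la/m$. Write $i\la=cm+\sigma$ with $0\le\sigma<m$, so that $\floor*{i\la/m}=c$. Then $\al+\sigma\equiv 0\bmod m$, which means $\al/m+\sigma/m$ is an integer. That integer is at least $\al/m$ and less than $\al/m+1$, so it equals $\ceil*{\al/m}$. Adding $c$ gives the claim.

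No step is a real obstacle. The only point needing care is the ceiling identity in the last step, in particular the case $\sigma=0$, where $\al$ is itself a multiple of $m$. In that case $\al/m+\sigma/m=\al/m=\ceil*{\al/m}$, so the same argument still applies.
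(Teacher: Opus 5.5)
Your proof is correct and complete. The three steps all hold for every $i\in\mathbb{Z}$, negative $i$ included:
\begin{itemize}
\item $\lfloor N/m\rfloor$ exceeds $\lfloor (N-1)/m\rfloor$ by one exactly when $m\mid N$;
\item $m\mid \alpha+i\lambda$ if and only if $i\equiv t \pmod m$, via $\alpha+i\lambda=(\alpha+t\lambda)+(i-t)\lambda$ and the invertibility of $\lambda$ modulo $m$;
\item $(\alpha+\sigma)/m$ is the unique integer in $[\alpha/m,\alpha/m+1)$, hence equals $\lceil \alpha/m\rceil$.
\end{itemize}

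The paper gives no proof of its own for this lemma; it is quoted from \cite[Lemma 3.3]{ABQ2019}. So there is nothing internal to compare against, and your division-algorithm argument is a self-contained verification. Your separate remark on $\sigma=0$ is harmless but unnecessary, since the interval argument already covers it.
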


\begin{theorem}\label{teo_equiv_gap}
Let $\Q=(Q_1, \dots, Q_n)$ an $n$-tuple of distinct totally ramified places in the extension $K(\cX)/K(x)$, $\negalpha=(\al_1, \dots, \al_n) \in \Z^n$, and $1\leq j\leq n$. Then $$\cL(D_{\negalpha}(\Q))=\cL(D_{\negalpha}(\Q)-Q_j)$$ if and only if
$$
\sum_{k=1}^{n}\floor*{\frac{\al_k-t_j\la_k}{q^m}}\deg(P_k)<\sum_{k=n+1}^{r}\ceil*{\frac{t_j\la_k}{q^m}}\deg(P_k),
$$
where $t_j\in\{0, \dots, q^m-1\}$ is the unique integer such that $\al_j-t_j\la_j\equiv 0 \bmod{q^m}$.
\end{theorem}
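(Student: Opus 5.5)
The plan is to apply the decomposition of Theorem \ref{teo_decomposition} to both $G=D_{\negalpha}(\Q)$ and $G-Q_j$, and compare the two decompositions summand by summand. Both divisors have support in $\{Q_1,\dots,Q_r\}$. Since $\cL(G-Q_j)\subseteq \cL(G)$, and the decomposition respects this inclusion summand by summand, equality holds if and only if, for every $0\leq i\leq q^m-1$,
$$\cL((G-Q_j+i(y))\rvert_{K(x)})=\cL((G+i(y))\rvert_{K(x)}).$$
Since these are finite-dimensional spaces with one contained in the other, this is equivalent to equality of their dimensions.

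\emph{Step 1: compute the restricted divisors.} Using (\ref{divisor_y}) and the fact that $Q_k$ is the unique place over $P_k$ with $e(Q_k|P_k)=q^m$, the coefficient of $P_k$ in $(G+i(y))\rvert_{K(x)}$ is:
\begin{itemize}
\item $\floor*{(\al_k-i\la_k)/q^m}$ for $k\leq n$;
\item $\floor*{-i\la_k/q^m}=-\ceil*{i\la_k/q^m}$ for $n<k\leq r$.
\end{itemize}
Over each $T_l$, which splits completely, the place $R_l$ carries coefficient $i\gamma_l\geq 0$. The other places above $T_l$ carry coefficient $0$, since $R_l$ is the only zero of $y$ over $T_l$. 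Hence the minimum over places above $T_l$ is $0$, and every other place of $K(x)$ also gets coefficient $0$. Thus
$$E_i:=(G+i(y))\rvert_{K(x)}=\sum_{k=1}^{n}\floor*{\frac{\al_k-i\la_k}{q^m}}P_k-\sum_{k=n+1}^{r}\ceil*{\frac{i\la_k}{q^m}}P_k.$$
The divisor $(G-Q_j+i(y))\rvert_{K(x)}$ is the same except that the $P_j$-coefficient is $\floor*{(\al_j-1-i\la_j)/q^m}$.

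\emph{Step 2: locate the unique index where the summands can differ.} Since $p\nmid \la_j$, we have $(-\la_j,q^m)=1$. Lemma \ref{lemma_floor}, applied with $\la=-\la_j$ and $m=q^m$, shows that the two $P_j$-coefficients agree for every $i\in\{0,\dots,q^m-1\}$ except $i=t_j$. At $i=t_j$ they differ by exactly $1$. Hence all summands with $i\neq t_j$ coincide automatically, and the equality $\cL(G)=\cL(G-Q_j)$ is equivalent to $\ell(E_{t_j})=\ell(E_{t_j}-P_j)$.

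\emph{Step 3: genus-zero computation.} In the rational function field $K(x)$ we have $\ell(A)=\max\{0,\deg A+1\}$, where $\deg$ counts the degrees of the places. Set $d=\deg(P_j)\geq 1$.
\begin{itemize}
\item If $\deg E_{t_j}<0$, then both dimensions are $0$, so they are equal.
\item If $\deg E_{t_j}\geq 0$, then $\ell(E_{t_j})=\deg E_{t_j}+1$, while $\ell(E_{t_j}-P_j)=\max\{0,\deg E_{t_j}-d+1\}$, which is strictly smaller.
\end{itemize}
So equality holds if and only if $\deg E_{t_j}<0$, and writing out $\deg E_{t_j}$ gives exactly the stated inequality.

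The main point requiring care is Step 1, namely the correct computation of the restriction $(G+i(y))\rvert_{K(x)}$. This relies on the minimum in the definition of restriction killing the contributions of the zeros $R_l$ of $y$ at the split places. The remaining steps are routine once Lemma \ref{lemma_floor} is applied.
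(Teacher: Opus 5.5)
Your proposal is correct and follows the paper's proof essentially step for step. Like the paper, you apply the decomposition of Theorem~\ref{teo_decomposition} to $D_{\negalpha}(\Q)$ and $D_{\negalpha}(\Q)-Q_j$, compute the restricted divisors, use Lemma~\ref{lemma_floor} to see that the summands can differ only at $i=t_j$, and finish with the genus-zero fact that $\cL(A)=\cL(B)$ for $A\le B$ if and only if $\deg B<0$ or $A=B$.

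Your explicit check that the completely split places $T_l$ contribute coefficient $0$ to the restriction fills in a detail the paper leaves implicit.
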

\begin{proof}
From (\ref{divisor_y}) we have that the principal divisor of $y$ in $K(\cX)$ is given by
\begin{equation*}
	(y)_{K(\cX)}=\gamma_1R_1+\gamma_2R_2+\dots +\gamma_uR_u-\la_1Q_1-\la_2Q_2-\cdots -\la_rQ_r.
\end{equation*}
Thus, for each $0\leq i \leq q^m-1$, we have
$$
D_\negalpha(\Q)+i(y)=\sum_{k=1}^{u}i\gamma_kR_k+\sum_{k=1}^{n}(\al_k-i\la_k)Q_k-\sum_{k=n+1}^{r}i\la_kQ_k
$$
and therefore
$$
[D_\negalpha(\Q)+i(y)]\rvert_{K(x)}=\sum_{k=1}^{n}\floor*{\frac{\al_k-i\la_k}{q^m}}P_k-\sum_{k=n+1}^{r}\ceil*{\frac{i\la_k}{q^m}}P_k.
$$
Analogously,
$$
[D_\negalpha(\Q)-Q_j+i(y)]\rvert_{K(x)}=\floor*{\frac{\al_j-1-i\la_j}{q^m}}P_j+\sum_{\substack{k=1\\ k\neq j}}^{n}\floor*{\frac{\al_k-i\la_k}{q^m}}P_k-\sum_{k=n+1}^{r}\ceil*{\frac{i\la_k}{q^m}}P_k.
$$
From Theorem \ref{teo_decomposition}, $\cL(D_{\negalpha}(\Q))=\cL(D_{\negalpha}(\Q)-Q_j)$ if and only if
$$
\cL \left(\floor*{\frac{\al_j-i\la_j}{q^m}}P_j+D\right)=\cL \left(\floor*{\frac{\al_j-1-i\la_j}{q^m}}P_j+D\right)$$
for every $0\leq i \leq q^m-1$, where 
$$D=\sum_{\substack{k=1\\ k\neq j}}^{n}\floor*{\frac{\al_k-i\la_k}{q^m}}P_k-\sum_{k=n+1}^{r}\ceil*{\frac{i\la_k}{q^m}}P_k.$$ 

Since for divisors $A, B\in \Div (K(x))$ such that $A\leq B$, we have $\cL(A)=\cL(B)$ if and only if $\deg (B)< 0$ or $A=B$, we conclude that $\cL(D_{\negalpha}(\Q))=\cL(D_{\negalpha}(\Q)-Q_j)$ if and only if, for each $0\leq i \leq q^m-1$, 
$$
\sum_{k=1}^{n}\floor*{\frac{\al_k-i\la_k}{q^m}}\deg(P_k)-\sum_{k=n+1}^{r}\ceil*{\frac{i\la_k}{q^m}}\deg(P_k) <0\quad \text{or}\quad
\floor*{\frac{\al_j-i\la_j}{q^m}}=\floor*{\frac{\al_j-1-i\la_j}{q^m}}.
$$

From Lemma \ref{lemma_floor}, there exists a unique integer $t_j\in \{0, \dots, q^m-1\}$ such that $\al_j-t_j\la_j\equiv 0 \bmod{q^m}$. Finally, also from Lemma \ref{lemma_floor}, we have that $\cL(D_{\negalpha}(\Q))=\cL(D_{\negalpha}(\Q)-Q_j)$ if and only if 
$$
\sum_{k=1}^{n}\floor*{\frac{\al_k-t_j\la_k}{q^m}}\deg(P_k)-\sum_{k=n+1}^{r}\ceil*{\frac{t_j\la_k}{q^m}}\deg(P_k) <0.
$$
\end{proof}

Note that the previous theorem generalizes the criterion to obtain gaps at one totally ramified place of $K(\mathcal{X})/K(x)$ given in \cite[Lemma 3.3]{ZZ2026}. In fact, as an immediate consequence of Theorem \ref{teo_equiv_gap} and Lemma \ref{lemma_floor}, we obtain the following result, which coincides with \cite[Lemma 3.3]{ZZ2026}.
\begin{corollary}\label{coro_criterion_gap_onepoint}
Let $1\leq l \leq r$ be such that $Q_l$ is a rational place. Then, for $\al\in \N$, we have that $\al\in G(Q_l)$ if and only if 
$$
\ceil*{\frac{\al}{q^m}}<\sum_{k=1}^{r}\ceil*{\frac{t\la_k}{q^m}}\deg(P_k),
$$
where $t\in\{0, \dots, q^m-1\}$ is the unique integer such that $\al-t\la_l\equiv 0 \bmod{q^m}$.
\end{corollary}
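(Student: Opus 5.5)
The plan is to apply Theorem \ref{teo_equiv_gap} with $n=1$, reindexing so that the single place in the tuple is $Q_l$. Since $\al\in\N$ and $H(Q_l)$ is the ordinary one-point semigroup, $\al\in G(Q_l)$ exactly when $\cL(\al Q_l)=\cL((\al-1)Q_l)$. Here $Q_l$ is rational, so $\deg(P_l)=1$ and the one-point gap characterization applies directly. Put $j=l$ and let $t$ be the unique element of $\{0,\dots,q^m-1\}$ with $\al-t\la_l\equiv 0 \bmod{q^m}$. Theorem \ref{teo_equiv_gap} then says that $\al$ is a gap if and only if
$$
\floor*{\frac{\al-t\la_l}{q^m}}<\sum_{k\neq l}\ceil*{\frac{t\la_k}{q^m}}\deg(P_k).
$$

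What remains is to rewrite the left-hand side. Because $q^m$ divides $\al-t\la_l$, the quotient $\frac{\al-t\la_l}{q^m}$ is an integer. It equals $\frac{\al}{q^m}-\frac{t\la_l}{q^m}$, and the two fractional parts involved agree: the fractional part of $\al/q^m$ is the same as that of $t\la_l/q^m$. Hence
$$
\floor*{\frac{\al-t\la_l}{q^m}}=\ceil*{\frac{\al}{q^m}}-\ceil*{\frac{t\la_l}{q^m}}.
$$
This also follows from Lemma \ref{lemma_floor} applied with $-\la_l$ in place of $\la$, using the identity $\floor*{\frac{\al+i\la}{m}}=\ceil*{\frac{\al}{m}}+\floor*{\frac{i\la}{m}}$ at $i=t$ together with $\floor*{-x}=-\ceil*{x}$. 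Note that $(\la_l,q^m)=1$ holds because $p\nmid\la_l$.

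Substituting this into the inequality and moving the term $\ceil*{t\la_l/q^m}$ to the right-hand side, where it appears with $\deg(P_l)=1$, turns the sum over $k\neq l$ into a sum over all $k=1,\dots,r$. This is exactly the stated criterion. There is no real obstacle. The only point needing care is the floor/ceiling identity at the distinguished index $t$, and that identity is what Lemma \ref{lemma_floor} supplies.
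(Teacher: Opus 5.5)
Your proposal is correct and follows the paper's proof exactly. You apply Theorem \ref{teo_equiv_gap} to the single place $Q_l$ to get $\floor*{\frac{\al-t\la_l}{q^m}}<\sum_{k\neq l}\ceil*{\frac{t\la_k}{q^m}}\deg(P_k)$, then rewrite the left side as $\ceil*{\frac{\al}{q^m}}-\ceil*{\frac{t\la_l}{q^m}}$ via Lemma \ref{lemma_floor}; your direct fractional-part check of that identity and your remark that $\deg(P_l)=1$ are details the paper leaves implicit.
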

\begin{proof}
Recall that $\al \in G(Q_l)$ if and only if $\cL(\al Q_l) = \cL((\al-1)Q_l)$. Thus, by Theorem \ref{teo_equiv_gap}, $\al \in G(Q_l)$ if and only if
\begin{equation*}
	\floor*{\frac{\al-t\la_l}{q^m}} < \sum_{\substack{k=1 \\ k \neq l}}^{r} \ceil*{\frac{t\la_k}{q^m}} \deg(P_k),
\end{equation*}
where $t \in \{0, \dots, q^m-1\}$ is the unique integer satisfying $\al - t\la_l \equiv 0 \bmod{q^m}$. The result follows since $\lfloor\frac{\al-t\la_l}{q^m}\rfloor=\lceil\frac{\al}{q^m}\rceil-\lceil\frac{t\la_l}{q^m}\rceil$ by Lemma \ref{lemma_floor}.
\end{proof}

 In the remainder of this work, we will assume that $\mathbf{Q}=(Q_1, \dots, Q_n)$ is an $n$-tuple of distinct totally ramified places of degree one in $K(\mathcal{X})/K(x)$.

Now, using Theorem \ref{teo_equiv_gap} and the characterization of the absolute maximal elements of $\widehat{H}(\Q)$ via discrepancies given in Proposition \ref{prop_discrepancia_Gamma}, we provide an arithmetic criterion to completely determine the elements of the set $\widehat{\Gamma}(\mathbf{Q})$.

\begin{lemma}\label{lemma_discrepancy_Kummer}
Assume that $2\leq n \leq q^s$ and let $\negalpha = (\alpha_1, \ldots, \alpha_n) \in \Z^n$. Then $\negalpha \in \widehat{\Gamma}(\Q)$ if and only if there exists a unique $t\in \{0, \dots, q^m-1\}$ such that $\al_k-t\la_k\equiv 0 \bmod{q^m}$ for every $1\leq k \leq n$ and
$$
\sum_{k=1}^{n}\ceil*{\frac{\al_k}{q^m}}-\sum_{k=1}^{r}\ceil*{\frac{t\la_k}{q^m}}\deg(P_k)=0.
$$
\end{lemma}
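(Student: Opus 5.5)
Since $q^s\ge n$, Proposition \ref{prop_discrepancia_Gamma} reduces the claim to a statement about discrepancies: $\negalpha\in\widehat{\Gamma}(\Q)$ if and only if $D_\negalpha(\Q)$ is a discrepancy with respect to every pair $Q_i\neq Q_j$. That is,
$$\cL(D_\negalpha(\Q))\neq\cL(D_\negalpha(\Q)-Q_i)\quad\text{and}\quad\cL(D_\negalpha(\Q)-Q_j)=\cL(D_\negalpha(\Q)-Q_i-Q_j)$$
for all $i\neq j$. Each such condition will be translated into arithmetic using Theorem \ref{teo_equiv_gap}. All $Q_k$ with $k\le n$ have degree one, so $\deg(P_k)=1$ for $k\le n$. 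For each $i$, let $t_i$ be the unique integer with $\al_i\equiv t_i\la_i \bmod q^m$, and define
$$S(t):=\sum_{k=1}^{n}\floor*{\frac{\al_k-t\la_k}{q^m}}-\sum_{k=n+1}^{r}\ceil*{\frac{t\la_k}{q^m}}\deg(P_k).$$
Theorem \ref{teo_equiv_gap} says that $\cL(D_\negalpha)\neq\cL(D_\negalpha-Q_i)$ if and only if $S(t_i)\ge 0$.

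\emph{The second condition.} I apply Theorem \ref{teo_equiv_gap} to $\negalpha-\nege_i$ at index $j$, where $\nege_i$ is the $i$-th standard basis vector. The relevant integer is still $t_j$, since the $j$-th coordinate is unchanged. The condition becomes $S'(t_j)<0$, where $S'$ is $S$ with the $i$-th floor replaced by $\floor*{(\al_i-1-t_j\la_i)/q^m}$. By Lemma \ref{lemma_floor}, that replacement lowers the $i$-th term by one exactly when $t_j\equiv t_i$, and otherwise leaves it unchanged. Two cases follow.
\begin{itemize}
\item If $t_j\neq t_i$, then $S'(t_j)=S(t_j)$. But the first condition applied to the pair $(j,i)$ gives $S(t_j)\ge 0$, a contradiction.
\item Therefore all $t_i$ coincide, equal to some $t$. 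This is exactly the congruence condition $\al_k\equiv t\la_k \bmod q^m$ for all $k\le n$, and $t$ is unique because $(\la_k,q^m)=1$.
\end{itemize}
With all $t_i=t$, the conditions become $S(t)\ge0$ and $S(t)-1<0$, that is, $S(t)=0$.

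\emph{Rewriting $S(t)=0$.} For each $k\le n$ we have $q^m\mid \al_k-t\la_k$, so by Lemma \ref{lemma_floor}
$$\floor*{\frac{\al_k-t\la_k}{q^m}}=\frac{\al_k-t\la_k}{q^m}=\ceil*{\frac{\al_k}{q^m}}-\ceil*{\frac{t\la_k}{q^m}}.$$
This is the same identity used in Corollary \ref{coro_criterion_gap_onepoint}, and it holds because $\al_k\equiv t\la_k$ and $p\nmid\la_k$. Substituting it, and using $\deg(P_k)=1$ for $k\le n$, turns $S(t)=0$ into the displayed equation of the lemma.

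\emph{Converse.} Reversing these steps gives the other direction. If a common $t$ exists and the sum vanishes, then $S(t)=0$, so each Riemann–Roch inequality and equality holds for every pair, and $D_\negalpha(\Q)$ is a discrepancy with respect to every pair.

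The main care point is the second condition: the integer $t$ must be computed for the shifted tuple at index $j$, and Lemma \ref{lemma_floor} must be applied to the $i$-th coordinate with the relevant integer $t_j$. Once that is done, the observation that the first condition for the pair $(j,i)$ forces $t_i=t_j$ is short.
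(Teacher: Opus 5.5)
Your proposal is correct and follows essentially the same route as the paper. Both reduce via Proposition \ref{prop_discrepancia_Gamma}, translate the two Riemann--Roch conditions with Theorem \ref{teo_equiv_gap}, use Lemma \ref{lemma_floor} to force all $t_i$ to coincide and $S(t)=0$, and then rewrite the floors as differences of ceilings; your case split on $t_i\neq t_j$ merely replaces the paper's chain of inequalities.

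One cosmetic point: applying Theorem \ref{teo_equiv_gap} to $\negalpha-\nege_i$ at index $j$ encodes $\cL(D_\negalpha(\Q)-Q_i)=\cL(D_\negalpha(\Q)-Q_i-Q_j)$. That is the second condition for the ordered pair $(Q_j,Q_i)$, not the one you displayed. This is harmless, since all ordered pairs are required and you correctly combine it with the first condition for $(j,i)$.
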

\begin{proof}
First note that, from Proposition \ref{prop_discrepancia_Gamma}, $\negalpha \in \widehat{\Gamma}(\Q)$ if and only if $D_\negalpha(\Q)$ is a discrepancy with respect to any pair of distinct places in $\{Q_1, Q_2, \dots, Q_n\}$. Thus, from Definition \ref{discrepancy} and Proposition \ref{prop_hat_H(Q)}, $\negalpha \in \widehat{\Gamma}(\Q)$ if and only if 
$$\negalpha \in \widehat{H}(\Q) \text{ and }\cL(D_{\negalpha-\nege_j}(\Q))=\cL(D_{\negalpha-\nege_j}(\Q)-Q_i) \text{ for every }1\leq i, j \leq n \text{ with }i \neq j,
$$ where $\nege_j$ is the $n$-tuple whose the $j$-th coordinate is $1$ and the others are $0$. Furthermore, from Proposition \ref{prop_hat_H(Q)} and Theorem \ref{teo_equiv_gap}, we have that

$\bullet$ $\negalpha \in \widehat{H}(\Q)$ if and only if
\begin{equation}\label{equation_1}
\sum_{k=1}^{n}\floor*{\frac{\al_k-t_i\la_k}{q^m}}\geq \sum_{k=n+1}^{r}\ceil*{\frac{t_i\la_k}{q^m}}\deg(P_k)\quad \text{for every } 1\leq i \leq n,
\end{equation} 
where $t_i\in\{0, \dots, q^m-1\}$ is the unique integer such that $\al_i-t_i\la_i\equiv 0 \bmod{q^m}$, and 

$\bullet$ $\cL(D_{\negalpha-\nege_j}(\Q))=\cL(D_{\negalpha-\nege_j}(\Q)-Q_i)$ for every $1\leq i, j\leq n$ with $i \neq j$ if and only if
\begin{equation}\label{equation_2}
\sum_{\substack{k=1\\k\neq j}}^{n}\floor*{\frac{\al_k-t_i\la_k}{q^m}}+\floor*{\frac{\al_j-1-t_i\la_j}{q^m}}< \sum_{k=n+1}^{r}\ceil*{\frac{t_i\la_k}{q^m}}\deg(P_k) \text{ for every } i, j\in I \text{ with } i\neq j. 
\end{equation}

Now, suppose that $\negalpha \in \widehat{\Gamma}(\Q)$. So, Equations (\ref{equation_1}) and  (\ref{equation_2}) are satisfied. Thus, for every $1\leq i, j\leq n$ with $i\neq j$,
\begin{align*}
0&\leq \sum_{k=1}^{n}\floor*{\frac{\al_k-t_i\la_k}{q^m}}- \sum_{k=n+1}^{r}\ceil*{\frac{t_i\la_k}{q^m}}\deg(P_k) & \text{(from Equation (\ref{equation_1}))}\\
&=\sum_{\substack{k=1\\k\neq j}}^{n}\floor*{\frac{\al_k-t_i\la_k}{q^m}}+\floor*{\frac{\al_j-1-t_i\la_j}{q^m}}-\sum_{k=n+1}^{r}\ceil*{\frac{t_i\la_k}{q^m}}\deg(P_k)\\
&\quad +\floor*{\frac{\al_j-t_i\la_j}{q^m}} -\floor*{\frac{\al_j-1-t_i\la_j}{q^m}}\\
&\leq -1+\floor*{\frac{\al_j-t_i\la_j}{q^m}}-\floor*{\frac{\al_j-1-t_i\la_j}{q^m}}& \text{(from Equation (\ref{equation_2}))}\\
&\leq -1+\floor*{\frac{\al_j-t_i\la_j-(\al_j-1-t_i\la_j)}{q^m}}+1\\
&=0.
\end{align*}

Thus, we conclude that
\begin{equation}\label{equation_3}
\sum_{k=1}^{n}\floor*{\frac{\al_k-t_i\la_k}{q^m}}-\sum_{k=n+1}^{r}\ceil*{\frac{t_i\la_k}{q^m}}\deg(P_k)=0 \quad \text{for every }1\leq i\leq n  
\end{equation}
and
\begin{equation}\label{equation_4}
\floor*{\frac{\al_j-t_i\la_j}{q^m}}=\floor*{\frac{\al_j-1-t_i\la_j}{q^m}}+1\quad  \text{for every } 1\leq i, j\leq n \text{ with } i\neq j.
\end{equation}
From Lemma \ref{lemma_floor} and Equation (\ref{equation_4}) we obtain $t:=t_1=t_2=\dots=t_n$. Thus, $\al_k-t\la_k\equiv 0 \bmod{q^m}$ for every $1\leq k \leq n$. Furthermore, from Lemma \ref{lemma_floor} and Equation (\ref{equation_3}) we conclude that
\begin{align*}
\sum_{k=1}^{n}\ceil*{\frac{\al_k}{q^m}}-\sum_{k=1}^{r}\ceil*{\frac{t\la_k}{q^m}}\deg(P_k)&=\sum_{k=1}^{n}\left(\floor*{\frac{\al_k-t\la_k}{q^m}}-\floor*{\frac{-t\la_k}{q^m}}\right)-\sum_{k=1}^{r}\ceil*{\frac{t\la_k}{q^m}}\deg(P_k)\\
&=\sum_{k=1}^{n}\left(\floor*{\frac{\al_k-t\la_k}{q^m}}+\ceil*{\frac{t\la_k}{q^m}}\right)-\sum_{k=1}^{r}\ceil*{\frac{t\la_k}{q^m}}\deg(P_k)\\
&=\sum_{k=1}^{n}\floor*{\frac{\al_k-t\la_k}{q^m}}-\sum_{k=n+1}^{r}\ceil*{\frac{t\la_k}{q^m}}\deg(P_k)\\
&=0.
\end{align*}

Conversely, suppose that there exists a unique $t\in \{0, \dots, q^m-1\}$ such that $\al_k-t\la_k\equiv 0 \bmod{q^m}$ for every $1\leq k \leq n$ and
$$
\sum_{k=1}^{n}\ceil*{\frac{\al_k}{q^m}}-\sum_{k=1}^{r}\ceil*{\frac{t\la_k}{q^m}}\deg(P_k)=0.
$$
From Lemma \ref{lemma_floor} we obtain 
$$
\sum_{k=1}^{n}\floor*{\frac{\al_k-t\la_k}{q^m}}- \sum_{k=n+1}^{r}\ceil*{\frac{t\la_k}{q^m}}\deg(P_k)=\sum_{k=1}^{n}\ceil*{\frac{\al_k}{q^m}}-\sum_{k=1}^{r}\floor*{\frac{t\la_k}{q^m}}\deg(P_k)=0
$$ 
and, for every $1\leq j\leq n$,
\begin{align*}
&\sum_{\substack{k=1\\k\neq j}}^{n}\floor*{\frac{\al_k-t\la_k}{q^m}}+\floor*{\frac{\al_j-1-t\la_j}{q^m}}- \sum_{k=n+1}^{r}\ceil*{\frac{t\la_k}{q^m}}\deg(P_k)\\
&=\sum_{\substack{k=1\\k\neq j}}^{n}\floor*{\frac{\al_k-t\la_k}{q^m}}+\floor*{\frac{\al_j-t\la_j}{q^m}}-1-\sum_{k=n+1}^{r}\ceil*{\frac{t\la_k}{q^m}}\deg(P_k)\\
&=\sum_{k=1}^{n}\floor*{\frac{\al_k-t\la_k}{q^m}}-1-\sum_{k=n+1}^{r}\ceil*{\frac{t\la_k}{q^m}}\deg(P_k)\\
&=-1.
\end{align*} 
Thus, $\negalpha$ satisfies Equations (\ref{equation_1}) and (\ref{equation_2}), which implies that $\negalpha \in \widehat{\Gamma}(\mathbf{Q})$, completing the proof.
\end{proof}

Now, using Theorem \ref{teo_equiv_gap} and the characterization of the relative maximal elements of $\widehat{H}(\Q)$ via discrepancies given in Proposition \ref{prop_discrepamcia_Lambda}, we provide an arithmetic criterion to completely determine the elements of the set $\widehat{\Lambda}(\mathbf{Q})$.

\begin{lemma}\label{lemma2_discrepancy_Kummer}
Assume that $2 \leq n \leq q^s$ and let $\negalpha=(\al_1, \dots, \al_n) \in \Z^n$. Then $\negalpha\in \widehat{\Lambda}(\Q)$ if and only if there exists a unique $t\in \{0, \dots, q^m-1\}$ such that $\al_k-t\la_k\equiv 0 \bmod{q^m}$ for every $1\leq k \leq n$ and
$$
\sum_{k=1}^{n}\ceil*{\frac{\al_k}{q^m}}-\sum_{k=1}^{r}\ceil*{\frac{t\la_k}{q^m}}\deg(P_k)=n-2.
$$
\end{lemma}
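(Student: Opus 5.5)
The proof will follow the proof of Lemma \ref{lemma_discrepancy_Kummer}, replacing Proposition \ref{prop_discrepancia_Gamma} by Proposition \ref{prop_discrepamcia_Lambda}. Write $\negbeta=\negalpha-\neg1$ and, for $i\neq j$, $A_{ij}:=D_{\negbeta}(\Q)+Q_i+Q_j$. By Definition \ref{discrepancy}, $\negalpha\in\widehat{\Lambda}(\Q)$ if and only if two conditions hold for every ordered pair $i\neq j$:
\begin{itemize}
\item[(a)] $\cL(A_{ij})\neq\cL(A_{ij}-Q_i)$;
\item[(b)] $\cL(A_{ij}-Q_j)=\cL(A_{ij}-Q_i-Q_j)$.
\end{itemize}
Both are statements about divisors of the form $D_{\negalpha'}(\Q)$ together with a single subtracted place, so Theorem \ref{teo_equiv_gap} turns each into an inequality. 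For (a) we use $\negalpha'=\negbeta+\nege_i+\nege_j$ and the place $Q_i$. For (b) we use $\negalpha'=\negbeta+\nege_i$ and the place $Q_i$. The point is that in both cases the $i$-th coordinate equals $\al_i$. Hence the relevant integer is $t_i$, defined by $\al_i-t_i\la_i\equiv0\bmod q^m$, exactly as before.

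Set
$$
S_i:=\sum_{k=1}^n\floor*{\frac{\al_k-t_i\la_k}{q^m}}-\sum_{k=n+1}^r\ceil*{\frac{t_i\la_k}{q^m}}\deg(P_k),
$$
and let $\delta_{ik}:=\floor*{\frac{\al_k-t_i\la_k}{q^m}}-\floor*{\frac{\al_k-1-t_i\la_k}{q^m}}\in\{0,1\}$. Note that $\delta_{ii}=1$, and by Lemma \ref{lemma_floor}, $\delta_{ik}=1$ exactly when $t_k=t_i$. Then (a) becomes
$$
S_i-\sum_{k\neq i,j}\delta_{ik}\geq0 \quad\text{for all } j\neq i,
$$
and (b) becomes
$$
S_i-\sum_{k\neq i}\delta_{ik}<0.
$$
Fix $i$ and put $d_i:=\sum_{k\neq i}\delta_{ik}\le n-1$. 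The two conditions give
$$
d_i-\min_{j\neq i}\delta_{ij}\le S_i\le d_i-1 .
$$
This forces $\min_{j\ne i}\delta_{ij}=1$, i.e.\ $\delta_{ij}=1$ for all $j\neq i$. Hence all the $t_i$ coincide; call the common value $t$. Moreover $d_i=n-1$, so $S_i=n-2$.

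It remains to translate $S=n-2$ into the stated form. Since $\al_k\equiv t\la_k\bmod q^m$, Lemma \ref{lemma_floor} gives
$$
\floor*{\frac{\al_k-t\la_k}{q^m}}=\ceil*{\frac{\al_k}{q^m}}-\ceil*{\frac{t\la_k}{q^m}},
$$
which yields the displayed identity with right-hand side $n-2$. Here $\deg(P_k)=1$ for $k\le n$, because the $Q_k$ have degree one. Uniqueness of $t$ holds because $(\la_1,q^m)=1$.

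For the converse, assume a common $t$ exists with the sum equal to $n-2$. Then every $t_i=t$, so all $\delta_{ik}=1$ and $S_i=n-2$. Condition (a) then reads $n-2-(n-2)=0\ge0$, and condition (b) reads $n-2-(n-1)=-1<0$. So both hold, and Proposition \ref{prop_discrepamcia_Lambda} gives $\negalpha\in\widehat{\Lambda}(\Q)$.

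The main obstacle is the bookkeeping in the first step. One must check carefully that, after shifting by $\neg1$ and adding $Q_i+Q_j$, the $i$-th coordinate is again $\al_i$, so the same $t_i$ governs both (a) and (b). One must also check that the other coordinates appear as $\al_k-1$ for $k\ne i,j$ and as $\al_j$ for $k=j$, which is where the $\delta_{ik}$ corrections come from. Once this is set up, the squeeze argument is the same as in Lemma \ref{lemma_discrepancy_Kummer}.
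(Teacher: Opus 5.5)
Your proposal is correct and follows essentially the same route as the paper. The paper also translates the two discrepancy conditions of Proposition \ref{prop_discrepamcia_Lambda} into its Equations (\ref{equation_5}) and (\ref{equation_6}) via Theorem \ref{teo_equiv_gap}, with $t_i$ as the governing integer. It then squeezes these, pair by pair, to force $t_1=\cdots=t_n$ and the value $n-2$, and checks the converse directly. Your $S_i$ and $\delta_{ik}$ notation is only a cleaner packaging of that same squeeze.
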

\begin{proof}
From Proposition \ref{prop_discrepamcia_Lambda}, $\negalpha\in \widehat{\Lambda}(\Q)$ if and only if  $D_{\negalpha-\textbf{1}}(\Q)+Q_i+Q_j$ is a discrepancy with respect to $Q_i$ and $Q_j$ for every $1\leq i,j\leq n$ with $i\neq j$. Thus, from Definition \ref{discrepancy}, $\negalpha\in \widehat{\Lambda}(\Q)$ if and only if $$\cL(D_{\negalpha-\neg1}(\Q)+Q_i+Q_j)\neq \cL(D_{\negalpha-\neg1}(\Q)+Q_j) \text{ for every }1\leq i, j \leq n\text{ with }i \neq j
$$ and $$
\cL(D_{\negalpha-\neg1}(\Q)+Q_i)=\cL(D_{\negalpha-\neg1}(\Q))\text{ for every }1\leq i\leq n.
$$ 
Furthermore, from Theorem \ref{teo_equiv_gap}, we have that

$\bullet$ $\cL(D_{\negalpha-\neg1}(\Q)+Q_i+Q_j)\neq \cL(D_{\negalpha-\neg1}(\Q)+Q_j)$ for every $1\leq i, j \leq n$ with $i \neq j$ if and only if
\begin{equation}\label{equation_5}
\sum_{\substack{k=1\\k\neq i, j}}^{n}\floor*{\frac{\al_k-1-t_i\la_k}{q^m}}+\floor*{\frac{\al_i-t_i\la_i}{q^m}}+\floor*{\frac{\al_j-t_i\la_j}{q^m}}\geq\sum_{k=n+1}^{r}\ceil*{\frac{t_i\la_k}{q^m}}\deg(P_k)
\end{equation}
for every $1\leq i, j\leq n$ with $i\neq j$, where $t_i\in \{0, \dots, q^m-1\}$ is the unique integer such that $\al_i-t_i\la_i\equiv 0\bmod{q^m}$, and

$\bullet$ $\cL(D_{\negalpha-\neg1}(\Q)+Q_i)=\cL(D_{\negalpha-\neg1}(\Q))$ for every $1\leq i\leq n$ if and only if
\begin{equation}\label{equation_6}
\sum_{\substack{k=1\\k\neq i}}^{n}\floor*{\frac{\al_k-1-t_i\la_k}{q^m}}+\floor*{\frac{\al_i-t_i\la_i}{q^m}}<\sum_{k=n+1}^{r}\ceil*{\frac{t_i\la_k}{q^m}}\deg(P_k)\quad  \text{for every } 1\leq i\leq n. 
\end{equation}

Suppose that $\negalpha\in \widehat{\Lambda}(\Q)$. So, Equations (\ref{equation_5}) and (\ref{equation_6}) are satisfied. Thus, for every $1\leq i, j\leq n$ with $i\neq j$,
\begin{align*}
0&\leq \sum_{\substack{k=1\\k\neq i, j}}^{n}\floor*{\frac{\al_k-1-t_i\la_k}{q^m}}+\floor*{\frac{\al_i-t_i\la_i}{q^m}}+\floor*{\frac{\al_j-t_i\la_j}{q^m}}\\
&\quad -\sum_{k=n+1}^{r}\ceil*{\frac{t_i\la_k}{q^m}}\deg(P_k)& \text{(from Equation (\ref{equation_5}))}\\
&=\sum_{\substack{k=1\\k\neq i}}^{n}\floor*{\frac{\al_k-1-t_i\la_k}{q^m}}+\floor*{\frac{\al_i-t_i\la_i}{q^m}}-\sum_{k=n+1}^{r}\ceil*{\frac{t_i\la_k}{q^m}}\deg(P_k)\\
&\quad+\floor*{\frac{\al_j-t_i\la_j}{q^m}}-\floor*{\frac{\al_j-1-t_i\la_j}{q^m}}\\
&\leq -1 +\floor*{\frac{\al_j-t_i\la_j}{q^m}}-\floor*{\frac{\al_j-1-t_i\la_j}{q^m}}& \text{(from Equation (\ref{equation_6}))}\\
&\leq -1+\floor*{\frac{\al_j-t_i\la_j-(\al_j-1-t_i\la_j)}{q^m}}+1\\
&=0.
\end{align*}

This implies 
\begin{equation}\label{equation_7}
\sum_{\substack{k=1\\k\neq i, j}}^{n}\floor*{\frac{\al_k-1-t_i\la_k}{q^m}}+\floor*{\frac{\al_i-t_i\la_i}{q^m}}+\floor*{\frac{\al_j-t_i\la_j}{q^m}}-\sum_{k=n+1}^{r}\ceil*{\frac{t_i\la_k}{m}}\deg(P_k)=0
\end{equation}
and
\begin{equation}\label{equation_8}
\floor*{\frac{\al_j-t_i\la_j}{q^m}}=\floor*{\frac{\al_j-1-t_i\la_j}{q^m}}+1
\end{equation}
for every $1\leq i, j\leq n$ with $i\neq j$. From Lemma \ref{lemma_floor} and Equation (\ref{equation_8}) we conclude that $t:=t_1=t_2=\dots=t_n$ and thus $\al_k-t\la_k\equiv 0 \bmod{q^m}$ for every $1\leq k \leq n$. Moreover, from Lemma \ref{lemma_floor} and Equation (\ref{equation_7}), we obtain
\begin{align*}
&\sum_{k=1}^{n}\ceil*{\frac{\al_k}{q^m}}-\sum_{k=1}^{r}\ceil*{\frac{t\la_k}{q^m}}\deg(P_k)\\
&=\sum_{k=1}^{n}\left(\floor*{\frac{\al_k-t\la_k}{q^m}}-\floor*{\frac{-t\la_k}{q^m}}\right)-\sum_{k=1}^{r}\ceil*{\frac{t\la_k}{q^m}}\deg(P_k)\\
&=\sum_{k=1}^{n}\left(\floor*{\frac{\al_k-t\la_k}{q^m}}+\ceil*{\frac{t\la_k}{q^m}}\right)-\sum_{k=1}^{r}\ceil*{\frac{t\la_k}{q^m}}\deg(P_k)\\
&=\sum_{k=1}^{n}\floor*{\frac{\al_k-t\la_k}{q^m}}-\sum_{k=n+1}^{r}\ceil*{\frac{t\la_k}{q^m}}\deg(P_k)\\
&=\sum_{\substack{k=1\\k\neq i, j}}^{n}\floor*{\frac{\al_k-t\la_k}{q^m}}+\floor*{\frac{\al_i-t\la_i}{q^m}}+\floor*{\frac{\al_j-t\la_j}{q^m}}-\sum_{k=n+1}^{r}\ceil*{\frac{t\la_k}{q^m}}\deg(P_k)\\
&=\sum_{\substack{k=1\\k\neq i, j}}^{n}\left(\floor*{\frac{\al_k-1-t\la_k}{q^m}}+1\right)+\floor*{\frac{\al_i-t\la_i}{q^m}}+\floor*{\frac{\al_j-t\la_j}{q^m}}-\sum_{k=n+1}^{r}\ceil*{\frac{t\la_k}{q^m}}\deg(P_k)\\
&=n-2+\sum_{\substack{k=1\\k\neq i, j}}^{n}\floor*{\frac{\al_k-1-t\la_k}{q^m}}+\floor*{\frac{\al_i-t\la_i}{q^m}}+\floor*{\frac{\al_j-t\la_j}{q^m}}-\sum_{k=n+1}^{r}\ceil*{\frac{t\la_k}{q^m}}\deg(P_k)\\
&=n-2.
\end{align*}

Conversely, suppose that there exists a unique $t\in \{0, \dots, q^m-1\}$ such that $\al_k-t\la_k\equiv 0 \bmod{q^m}$ for every $1\leq k \leq n$ and
$$
\sum_{k=1}^{n}\ceil*{\frac{\al_k}{q^m}}-\sum_{k=1}^{r}\ceil*{\frac{t\la_k}{q^m}}\deg(P_k)=n-2.
$$
From Lemma \ref{lemma_floor}, for every $1\leq i, j\leq n$ with $i\neq j$ we have that 
\begin{align*}
&\sum_{\substack{k=1\\k\neq i, j}}^{n}\floor*{\frac{\al_k-1-t\la_k}{q^m}}+\floor*{\frac{\al_i-t\la_i}{q^m}}+\floor*{\frac{\al_j-t\la_j}{q^m}}-\sum_{k=n+1}^{r}\ceil*{\frac{t\la_k}{m}}\deg(P_k)\\
&=\sum_{\substack{k=1\\k\neq i, j}}^{n}\left(\ceil*{\frac{\al_k}{q^m}}-\ceil*{\frac{t\la_k}{q^m}}-1\right)+\ceil*{\frac{\al_i}{q^m}}-\ceil*{\frac{t\la_i}{q^m}}+\ceil*{\frac{\al_j}{q^m}}-\ceil*{\frac{t\la_j}{q^m}}-\sum_{k=n+1}^{r}\ceil*{\frac{t\la_k}{m}}\deg(P_k)\\
&=-n+2+\sum_{k=1}^{n}\ceil*{\frac{\al_k}{q^m}}-\sum_{k=1}^{r}\ceil*{\frac{t\la_k}{q^m}}\deg(P_k)\\
&=0
\end{align*}
and, for every $1\leq i\leq n$,
\begin{align*}
&\sum_{\substack{k=1\\k\neq i}}^{n}\floor*{\frac{\al_k-1-t\la_k}{q^m}}+\floor*{\frac{\al_i-t\la_i}{q^m}}-\sum_{k=n+1}^{r}\ceil*{\frac{t\la_k}{m}}\deg(P_k)\\
&=\sum_{\substack{k=1\\k\neq i}}^{n}\left(\ceil*{\frac{\al_k}{q^m}}-\ceil*{\frac{t\la_k}{q^m}}-1\right)+\ceil*{\frac{\al_i}{q^m}}-\ceil*{\frac{t\la_i}{q^m}}-\sum_{k=n+1}^{r}\ceil*{\frac{t\la_k}{m}}\deg(P_k)\\
&=-n+1+\sum_{k=1}^{n}\ceil*{\frac{\al_k}{q^m}}-\sum_{k=1}^{r}\ceil*{\frac{t\la_k}{m}}\deg(P_k)\\
&=-1.
\end{align*} 
Thus, $\negalpha$ satisfies Equations (\ref{equation_5}) and (\ref{equation_6}), which implies that $\negalpha \in \widehat{\Lambda}(\mathbf{Q})$, completing the proof.
\end{proof}

Combining the previous lemmas, we obtain the following arithmetic characterization of the absolute and relative maximal elements of the generalized Weierstrass semigroup $\widehat{H}(\mathbf{Q})$. For abbreviation, in the remainder of this work, $\Upsilon$ will denote either $\Gamma$ or $\Lambda$.
\begin{theorem}\label{lemma_criterion_Upsilon}
Assume that $2 \leq n \leq q^s$ and let $\negalpha=(\al_1, \dots, \al_n)\in \Z^n$. Then $\negalpha\in \widehat{\Upsilon}(\Q)$ if and only if there exists a unique $t\in \{0, \dots, q^m-1\}$ such that $\al_k-t\la_k\equiv 0 \bmod{q^m}$ for every $1\leq k \leq n$ and 
$$
\sum_{k=1}^{n}\ceil*{\frac{\al_k}{q^m}}-\sum_{k=1}^{r}\ceil*{\frac{t\la_k}{q^m}}\deg(P_k)=\rho,\quad \text{where}\quad \rho=\left\{
\begin{array}{ll}
	0, & \text{if }\, \Upsilon=\Gamma,\\
	n-2, & \text{if }\, \Upsilon=\Lambda.
\end{array}\right.
$$  
\end{theorem}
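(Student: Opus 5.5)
This theorem is essentially a repackaging of Lemmas \ref{lemma_discrepancy_Kummer} and \ref{lemma2_discrepancy_Kummer}, so the proof will simply split into the two cases $\Upsilon=\Gamma$ and $\Upsilon=\Lambda$. In each case I invoke the corresponding lemma; both hold under the standing hypothesis $2\leq n\leq q^s$, and $q^s$ is the cardinality of the ground field $K=\fqs$, as Propositions \ref{prop_hat_H(Q)}, \ref{prop_discrepancia_Gamma} and \ref{prop_discrepamcia_Lambda} require.

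\textbf{Case $\Upsilon=\Gamma$.} Lemma \ref{lemma_discrepancy_Kummer} states verbatim that $\negalpha\in\widehat{\Gamma}(\Q)$ if and only if there is a $t\in\{0,\dots,q^m-1\}$ with $\al_k\equiv t\la_k \bmod{q^m}$ for all $k$ and
$$\sum_{k=1}^{n}\ceil*{\frac{\al_k}{q^m}}-\sum_{k=1}^{r}\ceil*{\frac{t\la_k}{q^m}}\deg(P_k)=0.$$
This is the statement with $\rho=0$.

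\textbf{Case $\Upsilon=\Lambda$.} Lemma \ref{lemma2_discrepancy_Kummer} gives the same description with right-hand side $n-2$, which is the statement with $\rho=n-2$.

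The only point deserving a remark is the word ``unique''. Since $p\nmid\la_k$, we have $(\la_k,q^m)=1$, so by Lemma \ref{lemma_floor} the congruence $\al_1-t\la_1\equiv 0 \bmod{q^m}$ alone already determines $t$ in $\{0,\dots,q^m-1\}$. Hence ``there exists a unique $t$'' is the same as ``there exists $t$'', and the two lemmas match the theorem exactly. I expect no real obstacle, since all the substantive work (translating the discrepancy conditions through Theorem \ref{teo_equiv_gap} and Lemma \ref{lemma_floor}) was already carried out in the two lemmas.
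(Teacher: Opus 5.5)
Your proposal is correct and follows the paper's own argument. The theorem is obtained by combining Lemma~\ref{lemma_discrepancy_Kummer} (the case $\rho=0$) with Lemma~\ref{lemma2_discrepancy_Kummer} (the case $\rho=n-2$); your added remark that uniqueness of $t$ is automatic because $(\la_k,q^m)=1$ is a valid clarification the paper leaves implicit.
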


\section{Explicit description of the set $\widehat{\Upsilon}(\Q)$}\label{Section 4}

In this section, using the arithmetic characterization of absolute and relative maximal elements of the generalized Weierstrass semigroup $\widehat{H}(\mathbf{Q})$ given in the previous section, we provide an explicit description of the set $\widehat{\Upsilon}(\mathbf{Q})$. In \cite[Theorem 4.1]{ZZ2026}, Zhang and Zhao provided an explicit description of the minimal generating set $\Gamma(\mathbf{Q})$ via complex constructions of functions in $K(\mathcal{X})$. Our explicit description, which comes from arithmetic methods and the characterization of maximal elements of $\widehat{H}(\mathbf{Q})$ via discrepancies, generalizes this result. 

To provide an explicit description of set $\widehat{\Upsilon}(\Q)$, we define, for each $1 \leq i \leq q^m-1$ and $1 \leq l \leq r$, the functions
\begin{equation}\label{t_beta}
	t_l(i):= (i\la_l)\bmod q^m \quad \text{and} \quad \be(i):=\sum_{k=1}^{r}\ceil*{\frac{i\la_k}{q^m}}\deg(P_k)-1.
\end{equation}

Note that these functions are similar to those employed in \cite{CMS2025} and \cite{CMT2026} to describe the gap sets and generalized Weierstrass semigroups, respectively, in arbitrary Kummer extensions of the rational function field. Moreover, we can utilize them to describe the gap set and the Weierstrass semigroup at any totally ramified place of degree one in the extension $K(\mathcal{X})/K(x)$.

\begin{proposition}\cite[Proposition 3.4 and Corollary 3.6]{ZZ2026}\label{teo_gapset}
	Let $1\leq l \leq r$ be such that $Q_l$ is a rational place. Then
	$$
	G(Q_l)=\{q^mj+t_l(i): 1 \leq i \leq q^m-1, \, 0\leq j \leq \be(i)-1\}
	$$
	and 
	$$
	H(Q_l)=\langle q^m, q^m\be(i)+t_l(i): 1 \leq i \leq q^m-1\rangle.
	$$
\end{proposition}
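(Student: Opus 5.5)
The plan is to derive both descriptions from Corollary \ref{coro_criterion_gap_onepoint}, reparametrizing by residue classes modulo $q^m$. Fix $\al\in\N$ and let $t\in\{0,\dots,q^m-1\}$ be the unique integer with $\al\equiv t\la_l \bmod q^m$; it exists and is unique because $p\nmid\la_l$, so $(\la_l,q^m)=1$. If $t=0$, the right-hand side of the criterion vanishes, so $\al$ is never a gap. This is consistent with the multiples of $q^m$ lying in $H(Q_l)$. If $1\le t\le q^m-1$, the criterion says that $\al\in G(Q_l)$ if and only if $\ceil*{\al/q^m}<\be(t)+1$, that is, $\ceil*{\al/q^m}\le \be(t)$. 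Here I use that $\deg(P_l)=1$, since $Q_l$ is rational and totally ramified over $P_l$.

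Next I write $\al$ in the form dictated by its residue. The condition $\al\equiv t\la_l \bmod q^m$ is the same as $\al\equiv t_l(t)\bmod q^m$, so $\al=q^mj+t_l(t)$ for some integer $j\ge 0$. When $t\ne 0$ we have $0<t_l(t)<q^m$, because $q^m\nmid t\la_l$. Hence $\ceil*{\al/q^m}=j+1$. The gap condition then becomes $j+1\le\be(t)$, i.e. $0\le j\le \be(t)-1$.

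Conversely, every element $q^mj+t_l(i)$ with $1\le i\le q^m-1$ and $0\le j\le\be(i)-1$ is a positive integer. Its associated $t$ equals $i$, again by uniqueness mod $q^m$ and invertibility of $\la_l$. The computation above then shows it is a gap. This proves the description of $G(Q_l)$. The map $t\mapsto t_l(t)$ is a bijection of $\{1,\dots,q^m-1\}$, so the listed elements are distinct, although distinctness is not actually required.

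For $H(Q_l)=\N_0\setminus G(Q_l)$, I use the same parametrization. In residue class $t_l(i)$ with $i\ne0$, the nongaps are exactly $q^mj+t_l(i)$ with $j\ge\be(i)$, which is $q^m\be(i)+t_l(i)+q^m\N_0$. The class $0$ consists entirely of nongaps, namely $q^m\N_0$. Therefore every nongap lies in $\langle q^m, q^m\be(i)+t_l(i)\rangle$.

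The reverse inclusion holds because $H(Q_l)$ is a semigroup containing all of these generators. Here one must check that $q^m\be(i)+t_l(i)\ge 0$, i.e. $\be(i)\ge 0$. This follows because $\ceil*{i\la_l/q^m}\ge1$ for $i\ge1$, so $\be(i)\ge 0$. The only delicate points are this nonnegativity of $\be(i)$ and the ceiling identity $\ceil*{(q^mj+t_l(t))/q^m}=j+1$. Neither is a real obstacle; the main care needed is bookkeeping between $t$ and $t_l(t)$.
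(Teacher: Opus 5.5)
Your proof is correct. The case split on $t$, the identity $\ceil*{(q^mj+t_l(t))/q^m}=j+1$ for $t\neq 0$, and the check $\be(i)\geq 0$ are exactly what is needed; the remark about $\deg(P_l)=1$ is harmless but already absorbed into Corollary \ref{coro_criterion_gap_onepoint}. The paper does not reprove this statement, quoting it from \cite{ZZ2026}, but your route (reading off gaps residue class by residue class modulo $q^m$ from Corollary \ref{coro_criterion_gap_onepoint}, which the paper notes coincides with \cite[Lemma 3.3]{ZZ2026}) is the derivation its setup is built to support.
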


We can now present our main result, in which we provide an explicit description of the set $\widehat{\Upsilon}(\Q)$ in terms of the functions defined in (\ref{t_beta}). This description is similar to the description of the maximal elements of the generalized Weierstrass semigroup in Kummer extensions given in \cite[Theorem 3.4]{CMT2026}.

\begin{theorem}\label{teo_widehat_Upsilon}
Let $2\leq n \leq q^s$ and $\Q=(Q_{1}, Q_{2}, \dots, Q_{n})$ be an $n$-tuple of distinct totally ramified places of degree one in $K(\cX)/K(x)$. Then
\begin{align*}
\widehat{\Upsilon}(\Q)=&\Bigg\{(q^mj_1+t_{1}(i), \dots,  q^mj_n+t_{n}(i)):
\begin{array}{l}
	1\leq i \leq q^m-1,\,\, j_1, \dots, j_n\in \Z, \\
	j_1+\cdots +j_n=\beta(i)+1-n+\rho
\end{array}
\Bigg\}\\
& \bigcup \Bigg\{(q^mj_1, q^mj_2, \dots, q^mj_n):\, j_1, j_2, \dots, j_n \in \Z, \, \,   j_1+j_2+\cdots+j_n=\rho \Bigg\},
\end{align*}
where 
$$\quad \rho=\left\{
\begin{array}{ll}
0, & \text{if }\, \Upsilon=\Gamma,\\
n-2, & \text{if }\, \Upsilon=\Lambda.
\end{array}\right.
$$  
\end{theorem}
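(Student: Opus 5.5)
The plan is to derive the explicit description directly from the arithmetic criterion in Theorem \ref{lemma_criterion_Upsilon}: each element of $\widehat{\Upsilon}(\Q)$ is parametrized by its common residue parameter $t$, and we split into the cases $t=0$ and $t=i\in\{1,\dots,q^m-1\}$. Since $Q_1,\dots,Q_n$ are rational, $\deg(P_k)=1$ for $1\le k\le n$. This matters because the sum $\sum_{k=1}^r\lceil t\la_k/q^m\rceil\deg(P_k)$ equals $\be(t)+1$ for $t\ge 1$ and $0$ for $t=0$.

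First, take $\negalpha\in\widehat{\Upsilon}(\Q)$, and let $t$ be the integer provided by Theorem \ref{lemma_criterion_Upsilon}, so that $\al_k\equiv t\la_k \pmod{q^m}$ for all $k\le n$.

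If $t=0$, then $q^m\mid\al_k$, so we write $\al_k=q^mj_k$. Then $\lceil \al_k/q^m\rceil=j_k$, and the condition becomes $j_1+\cdots+j_n=\rho$, which gives the second set.

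If $t=i\ge1$, then $\al_k\equiv t_k(i)\pmod{q^m}$ with $0\le t_k(i)\le q^m-1$, so we write $\al_k=q^mj_k+t_k(i)$. Since each $\la_k$ is prime to $p$, hence to $q^m$, we have $t_k(i)\ne0$. Therefore $\lceil \al_k/q^m\rceil=j_k+1$, and the criterion reads $\sum_k (j_k+1)-(\be(i)+1)=\rho$, that is, $j_1+\cdots+j_n=\be(i)+1-n+\rho$. This gives the first set.

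Conversely, each tuple in either set satisfies the congruences with $t=i$, respectively $t=0$. The same ceiling computation run backwards gives the required identity, so Theorem \ref{lemma_criterion_Upsilon} places the tuple in $\widehat{\Upsilon}(\Q)$.

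The only delicate point is the uniqueness of $t$ demanded in Theorem \ref{lemma_criterion_Upsilon}, together with the nonvanishing of $t_k(i)$. Because $(\la_k,q^m)=1$, the congruence $\al_k\equiv t\la_k$ determines $t$ modulo $q^m$ from any single coordinate, so uniqueness is automatic. It also follows that the two sets in the statement are disjoint and that the parameter $i$ of an element is well defined. I expect no real obstacle beyond carefully tracking $\deg(P_k)=1$ for $k\le n$ while leaving $\deg(P_k)$ general for $k>n$ inside $\be(i)$.
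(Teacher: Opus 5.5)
Your proposal is correct and follows essentially the same route as the paper: apply Theorem~\ref{lemma_criterion_Upsilon}, split according to whether the common residue parameter is $0$ or some $i\in\{1,\dots,q^m-1\}$, write $\alpha_k=q^mj_k+t_k(i)$ (resp.\ $\alpha_k=q^mj_k$), and use $\lceil \alpha_k/q^m\rceil=j_k+1$ (resp.\ $j_k$) in both directions. You also justify explicitly that $t_k(i)\neq 0$ because $(\lambda_k,q^m)=1$, a point the paper uses without comment.
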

\begin{proof}
Define the set 
\begin{align*}
	\widehat{\Upsilon}=&\Bigg\{(q^mj_1+t_{1}(i), \dots,  q^mj_n+t_{n}(i)):
	\begin{array}{l}
		1\leq i \leq q^m-1,\,\, j_1, \dots, j_n\in \Z, \\
		j_1+\cdots +j_n=\beta(i)+1-n+\rho
	\end{array}
	\Bigg\}\\
	& \bigcup \Bigg\{(q^mj_1, q^mj_2, \dots, q^mj_n):\, j_1, j_2, \dots, j_n \in \Z, \, \,   j_1+j_2+\cdots+j_n=\rho \Bigg\}.
\end{align*}
Let $\negalpha\in \widehat{\Upsilon}$. We will prove that $\negalpha\in \widehat{\Upsilon}(\Q)$. We will analyze two cases:\\ 

\noindent $\diamond$ {\it Case $\negalpha=(q^mj_1+t_1(i), \dots, q^mj_n+t_n(i))$}:\\  
\noindent In this case note that there exists a unique integer $t:=i\in \{0, \dots, q^m-1\}$ such that $q^mj_k+t_k(i)-t\la_k\equiv 0 \bmod{q^m}$ for every $1\leq k \leq n$. Moreover, since $1 \leq t_{k}(i) \leq q^m-1$, we have
\begin{align*}
\sum_{k=1}^{n}\ceil*{\frac{q^mj_k+t_k(i)}{q^m}}-\sum_{k=1}^{r}\ceil*{\frac{t\la_k}{q^m}}\deg(P_k)&=
\sum_{k=1}^{n}\left(j_k+\ceil*{\frac{t_k(i)}{m}}\right)-\sum_{k=1}^{r}\ceil*{\frac{i\la_k}{q^m}}\deg(P_k)\\
&=j_1+\cdots+j_n+n-\sum_{k=1}^{r}\ceil*{\frac{i\la_k}{q^m}}\deg(P_k)\\
&=\be(i)+1+\rho-\sum_{k=1}^{r}\ceil*{\frac{i\la_k}{q^m}}\deg(P_k)\\
&=\rho.
\end{align*}

\noindent$\diamond$ {\it Case $\negalpha=(q^mj_1, \dots, q^mj_n)$}:\\
\noindent Analogously to the previous case, there exists a unique integer $t:=0\in \{0, \dots, q^m-1\}$ such that $q^mj_k-t\la_k\equiv 0 \bmod{q^m}$ for every $1\leq k \leq n$. Furthermore,
$$
\sum_{k=1}^{n}\ceil*{\frac{q^mj_k}{q^m}}-\sum_{k=1}^{r}\ceil*{\frac{t\la_k}{q^m}}\deg(P_k)=j_1+\cdots+j_n=\rho.
$$
Thus, by Theorem \ref{lemma_criterion_Upsilon}, $\negalpha\in \widehat{\Upsilon}(\Q)$ and consequently $\widehat{\Upsilon}\subseteq \widehat{\Upsilon}(\Q)$.\\ 

Conversely, let $\negalpha=(\al_1, \dots, \al_n) \in \widehat{\Upsilon}(\Q)$. From Theorem \ref{lemma_criterion_Upsilon}, there exists a unique $i\in \{0, \dots, q^m-1\}$ such that $\al_k-i\la_k\equiv 0 \bmod{q^m}$ for every $1\leq k\leq n$, and
$$
\sum_{k=1}^{n}\ceil*{\frac{\al_k}{q^m}}-\sum_{k=1}^{r}\ceil*{\frac{i\la_k}{q^m}}\deg(P_k)=\rho.
$$
For each $1\leq k \leq n$, we can write $\al_k=q^mj_k+i_k$, where $j_k\in \Z$ and $0\leq i_k \leq q^m-1$. We will analyze two cases:\\

\noindent$\diamond$ {\it Case $i=0$}:\\
\noindent Here we have that $\al_k\equiv 0 \bmod{q^m}$ for every $1\leq k\leq n$, and thus $i_1=i_2=\dots=i_n=0$. So, 
$$
\rho=\sum_{k=1}^{n}\ceil*{\frac{\al_k}{q^m}}-\sum_{k=1}^{r}\ceil*{\frac{i\la_k}{q^m}}\deg(P_k)=\sum_{k=1}^{n}\ceil*{\frac{q^mj_k}{q^m}}=j_1+j_2+\cdots+j_n.
$$
This implies that $\negalpha=(q^mj_1, q^mj_2, \dots, q^mj_n) \in \widehat{\Upsilon}$.\\

\noindent $\diamond$ {\it Case $i\neq 0$}:\\
\noindent From the relations $\al_k-i\la_k\equiv 0 \bmod{q^m}$ and $\al_k=q^mj_k+i_k$ we obtain that $i_k \equiv i \la_k \bmod{q^m}$ and therefore $i_k=t_k(i)$ for every $1\leq k \leq n$. Thus,
\begin{align*}
\rho&=\sum_{k=1}^{n}\ceil*{\frac{\al_k}{q^m}}-\sum_{k=1}^{r}\ceil*{\frac{i\la_k}{q^m}}\deg(P_k)\\
&=\sum_{k=1}^{n}\ceil*{\frac{q^mj_k+t_k(i)}{q^m}}-\sum_{k=1}^{r}\ceil*{\frac{i\la_k}{q^m}}\deg(P_k)\\
&=\sum_{k=1}^{n}\left(j_k+\ceil*{\frac{t_k(i)}{q^m}}\right)-\sum_{k=1}^{r}\ceil*{\frac{i\la_k}{q^m}}\deg(P_k)\\
&=j_1+j_2+\cdots+j_n+n-\be(i)-1.
\end{align*}
Thus, we conclude that $\negalpha=(q^mj_1+t_1(i), \dots, q^mj_n+t_n(i)) \in \widehat{\Upsilon}$ and  $\widehat{\Upsilon}(\Q)\subseteq \widehat{\Upsilon}$. 
\end{proof}

Note that Theorem \ref{teo_widehat_Upsilon} generalizes the result established in \cite[Theorem 4.1]{ZZ2026}, where the authors determined the set $\Gamma(\mathbf{Q})$. In fact, as an immediate consequence of the previous theorem, we obtain the following corollary.

\begin{corollary}\label{coro_Upsilon}
	Let $2\leq n \leq q^s$ and $\Q=(Q_{1}, Q_{2}, \dots, Q_{n})$ as in Theorem \ref{teo_widehat_Upsilon}. Then
	$$
		\Upsilon(\Q)=\Bigg\{(q^mj_1+t_{1}(i), \dots,  q^mj_n+t_{n}(i)):
		\begin{array}{l}
			1\leq i \leq q^m-1,\,\, j_1, \dots, j_n\in \N_0, \\
			j_1+\cdots +j_n=\beta(i)+1-n+\rho
		\end{array}
		\Bigg\},
	$$
	where 
	$$\quad \rho=\left\{
	\begin{array}{ll}
		0, & \text{if }\, \Upsilon=\Gamma,\\
		n-2, & \text{if }\, \Upsilon=\Lambda.
	\end{array}\right.
	$$  
\end{corollary}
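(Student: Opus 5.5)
The plan is to obtain the corollary by intersecting both sides of Theorem \ref{teo_widehat_Upsilon} with $\N^n$. This uses the definitions $\Gamma(\Q)=\widehat{\Gamma}(\Q)\cap\N^n$ and $\Lambda(\Q)=\widehat{\Lambda}(\Q)\cap\N^n$. The description of $\widehat{\Upsilon}(\Q)$ is a union of two families, so I handle each separately.

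First consider the family indexed by $1\leq i\leq q^m-1$. An element there is $(q^mj_1+t_1(i),\dots,q^mj_n+t_n(i))$ with $j_k\in\Z$. Since $Q_k$ is totally ramified, $p\nmid\la_k$, so $(\la_k,q^m)=1$. Because $1\leq i\leq q^m-1$, it follows that $q^m\nmid i\la_k$, so $1\leq t_k(i)\leq q^m-1$. Hence $q^mj_k+t_k(i)\geq 1$ exactly when $j_k\geq 0$, because $j_k\leq -1$ forces $q^mj_k+t_k(i)\leq -1$. The intersection of this family with $\N^n$ is therefore the same family with $j_1,\dots,j_n\in\N_0$ and the same linear constraint on $\sum j_k$. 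This is precisely the set in the statement.

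Next consider the family $(q^mj_1,\dots,q^mj_n)$ with $\sum j_k=\rho$. Membership in $\N^n$ requires every $j_k\geq 1$, which forces $\sum j_k\geq n$. For $\Upsilon=\Gamma$ we have $\rho=0<n$, and for $\Upsilon=\Lambda$ we have $\rho=n-2<n$. So this family contributes nothing to $\N^n$. Combining the two families gives the displayed equality.

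The only step needing any care is the observation that $t_k(i)\neq 0$ for $1\leq i\leq q^m-1$, which rests on the coprimality $(\la_k,q^m)=1$. The rest is bookkeeping.
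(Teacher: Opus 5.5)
Your proposal is correct and matches the paper, which calls the corollary an immediate consequence of Theorem \ref{teo_widehat_Upsilon}: you intersect with $\N^n$, use $1\leq t_k(i)\leq q^m-1$ (from $(\la_k,q^m)=1$) to force $j_k\geq 0$ in the first family, and observe that the second family contributes nothing to $\N^n$ because $\rho<n$. You also correctly read the paper's garbled definition as $\Lambda(\Q)=\widehat{\Lambda}(\Q)\cap\N^n$.
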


Finally, we present two curves to which we apply our obtained results: the Artin-Mumford curve and a certain curve with many rational places.

\begin{example}[Artin-Mumford curve]\label{ex1}
	Let $p\geq 3$ be a prime number.
	The Artin-Mumford curve over $\fp$ is the curve defined by the affine equation 
	$$
	\cM: \quad (y^p-y)(x^p-x)=1.
	$$
	This curve was introduced to investigate properties unique to curves in positive characteristic. For example, for $p>37$, the Artin-Mumford curve $\cM$ is an example of curve with a large automorphism group (that is, whose size exceeds the Hurwitz bound) and with non-zero $p$-rank, see \cite{D1975}.

	Note that the pole divisor of the rational function $\frac{1}{x^p-x}$ in  $\mathbb{F}_{p}(x)$ is given by
	$$
	\left(\frac{1}{x^p-x}\right)_\infty=P_1+P_2+\dots+P_p,
	$$
	where $P_1, \dots, P_p$ are the places in $\mathbb{F}_{p}(x)$ corresponding to the zeros of $x^p-x$. Therefore, for this case, we have
	$$
	\be(i)=\sum_{k=1}^{p}\ceil*{\frac{i}{p}}-1=p-1\quad \text{for every}\quad 1\leq i \leq p-1.
	$$
	
	Now, for $2\leq n\leq p$, let $Q_1, Q_2, \dots, Q_n$ be places in $\mathbb{F}_p(\cM)$ associated to te zeros of $x^p-x$, and let $\Q=(Q_1, Q_2, \dots, Q_n)$. Thus, from Theorem \ref{teo_widehat_Upsilon}, we obtain 
	\begin{align*}
		\widehat{\Upsilon}(\Q)=&\Bigg\{(pj_1+i, \dots,  pj_n+i):
		\begin{array}{l}
			1\leq i \leq p-1,\,\, j_1, \dots, j_n\in \Z, \\
			j_1+\cdots +j_n=p-n+\rho
		\end{array}
		\Bigg\}\\
		& \bigcup \Bigg\{(pj_1, \dots, pj_n):\, j_1, \dots, j_n \in \Z, \, \,   j_1+\cdots+j_n=\rho \Bigg\},
	\end{align*}
	where $\rho=0$ if $\Upsilon=\Gamma$ and $\rho=n-2$ if $\Upsilon=\Lambda$.
	
\end{example}

\begin{example}[Curve with many rational places]\label{ex2}
	Consider the curve over $\mathbb{F}_{121}$ defined by the affine equation
	$$
	\cY:\quad y^{11}+y=\frac{(x^2+1)^2}{x^2}.
	$$
	This curve was introduced in \cite[Example 4.12]{GMQ2023}, and it is the curve of genus $20$ over $\mathbb{F}_{121}$ with the highest number of rational places recorded in the manYPoints Table \cite{MP2009}, having exactly $442$ rational places.
	
	The pole divisor of the rational function $\frac{(x^2+1)^2}{x^2}$ in  $\mathbb{F}_{121}(x)$ is given by
	$$
	\left(\frac{(x^2+1)^2}{x^2}\right)_\infty=2P_0+2P_\infty,
	$$
	where $P_0$ and $P_\infty$ are the places in $\mathbb{F}_{121}(x)$ corresponding to the zero and pole of $x$, respectively. Therefore, for this curve, we have that
	$$
	\be(i)=2\ceil*{\frac{2i}{11}}-1\quad \text{for every}\quad 1\leq i\leq 10.
	$$

Let $Q_0, Q_\infty$ be places in  $\mathbb{F}_{121}(\cY)$ such that $Q_0|P_0$ and  $Q_\infty|P_\infty$. Then, for $\Q=(Q_0, Q_\infty)$, we have
\begin{align*}
	\widehat{\Upsilon}(\Q)=&\Bigg\{(11j_1+(2i)\bmod{11},  11j_n+(2i)\bmod{11}):
	\begin{array}{l}
		1\leq i \leq 10,\,\, j_1, j_2\in \Z, \text{ and} \\
		j_1+j_2=2\ceil*{\frac{2i}{11}}-2
	\end{array}
	\Bigg\}\\
	& \bigcup \Bigg\{(11j_1, 11j_2):\, j_1, j_2\in \Z, \, \,   j_1+j_2=0 \Bigg\}.
\end{align*}
In particular,
\begin{align*}
	\Gamma(\Q)=\Lambda(\Q)=\{&  (8, 8), (20, 20), (6, 6), (31, 9), (12, 12), (1, 23), (2, 2), (14, 14), (3, 25),\\
	& (10, 10), (9, 31), (4, 4), (7, 29), (27, 5), (18, 18), (23, 1), (5, 27), (25, 3),\\
	& (16, 16), (29, 7) \}.
\end{align*}
Furthermore, from Theorem \ref{coro_G0(Q)}, we can determine the set of pure gaps. In this case, we obtain 
\begin{align*}
	G_0(\Q)=\{& (9, 3), (3, 8), (9, 9), (7, 20), (6, 5), (12, 3), (3, 3), (5, 12), (16, 5),
	(7, 9), (4, 3),\\
	& (1, 5), (12, 5), (7, 1), (1, 4), (9, 12), (14, 3), (20, 9), (20, 3), (5, 1), (1, 20), (7, 10),\\
	& (5, 8), (16, 1), (1, 12), (3, 6), (5, 3), (18, 5),
	(3, 1), (1, 10), (3, 12), (5, 5), (1, 16),\\
	& (5, 9), (18, 3), (5, 20), (7, 7), (5, 7), (1, 18), (3, 20), (1, 6), (10, 7), (20, 5), (14, 9),\\
	& (9, 10), (12, 7), (2,
	1), (1, 3), (3, 7), (9, 16), (3, 10), (10, 3), (1, 14), (7, 12), (5, 14),\\
	& (16, 7), (14, 5), (9, 14), (18, 7), (8, 1), (8, 3), (7, 5), (5, 18), (8, 5), (3, 16),
	(18, 9),\\
	& (16, 3), (5, 10), (3, 14), (20, 7), (3, 5), (12, 9), (1, 1), (8, 7), (4, 1), (5, 6), (3, 9),\\
	& (12, 1), (16, 9), (3, 18), (9, 18), (14, 1), (7, 3), (9,
	20), (10, 5), (10, 9), (1, 8),\\
	& (1, 7), (7, 8), (5, 16), (7, 14), (20, 1), (1, 2), (3, 4), (18, 1), (1, 9), (10, 1), (14, 7),\\
	& (9, 7), (6, 1), (9, 5), (7, 16), (9, 1), (6, 3), (7, 18)\}.
\end{align*}
\end{example}

\bibliographystyle{abbrv}

\bibliography{generalizedws} 

\end{document}